\providecommand{\U}[1]{\protect\rule{.1in}{.1in}}
\newtheorem{theorem}{Theorem}
\newtheorem{corollary}{Corollary}
\newtheorem{lemma}{Lemma}
\newtheorem{proposition}{Proposition}
\def \C{\mathbb{C}}
\def \H{\mathbb{H}}
\def \N{\mathbb{N}}
\def \Q{\mathbb{Q}}
\def \K{\mathbb{K}}
\def \Z{\mathbb{Z}}
\begin{document}

\title{On Algebraic Conditions for the Non-Vanishing of Linear Forms in Jacobi Theta-Constants}
\author{
\textsc{
Carsten Elsner\thanks{Institute of Computer Sciences, FHDW University of Applied Sciences, Freundallee 15, 30173 Hannover, Germany 
\newline e-mail: carsten.elsner@fhdw.de  (Corresponding author)},\,\,\, 
Veekesh Kumar\thanks{Department of Mathematics, Indian Institute of Technology, Dharwad  580007, Karnataka, India.
\newline e-mail: veekeshk@iitdh.ac.in}
}}
\date{}
\maketitle

\begin{abstract}
Elsner, Luca and Tachiya proved in 2019 that  the  values of the Jacobi-theta constants $\theta_3(m\tau)$ and $\theta_3(n\tau)$ are algebraically independent over
${\Q}$ for distinct integers $m,n$ under some conditions on $\tau$. On the other hand, in 2018 Elsner and Tachiya also proved that three values
$\theta_3(m\tau),\theta_3(n\tau)$ and $\theta_3(\ell \tau)$ are algebraically dependent over ${\Q}$. In this article we prove the non-vanishing of linear forms in
$\theta_3(m\tau)$, $\theta_3(n\tau)$ and $\theta_3(\ell \tau)$ under various conditions on $m,n,\ell$, and $\tau$. Among other things we prove that for odd and distinct
positive integers $m,n>3$ the three numbers $\theta_3(\tau)$, $\theta_3(m\tau)$ and $\theta_3(n \tau)$ are linearly independent over $\overline{\mathbb{Q}}$ when
$\tau$ is an algebraic number of some degree greater or equal to 3. In some sense this fills the gap between the above-mentioned former results on theta constants. A
theorem on the linear independence over $\mathbb{C(\tau)}$ of the functions $\theta_3(a_1 \tau),\ldots,\theta_3(a_m \tau)$ for distinct positive rational numbers 
$a_1, \ldots a_m$ is also established.
\end{abstract}

\thispagestyle{fancy} \fancyhead{} \fancyhead[L]{In: Book Title \\
Editor: Editor Name, pp. {\thepage-\pageref{lastpage-01}}}
\fancyhead[R]{ISBN 0000000000  \\
\copyright~2007 Nova Science Publishers, Inc.} \fancyfoot{} \renewcommand{\headrulewidth}{0pt}

\thispagestyle{empty} \setcounter{page}{1}

\noindent\textbf{Keywords:}  Linear independence, transcendental numbers,  Jacobi theta-constants, \\
\hspace*{59pt} modular functions.
\newline\noindent\textbf{AMS Subject Classification:} Primary 11J72, Secondary 11J91.


\pagestyle{fancy} \fancyhead{}
\fancyhead[EC]{Carsten Elsner and Vekesh Kumar} \fancyhead[EL,OR]{\thepage}
\fancyhead[OC]{On Algebraic Conditions for the Non-Vanishing of Linear Forms in Jacobi Theta-Constants}
\fancyfoot{} \renewcommand\headrulewidth{0.5pt}

\section{Introduction}
\label{Sec1}
For a complex number $\tau$ from the upper complex half plane $\mathbb{H}$, the theta functions are defined as follows;
$$
\theta_2(\tau)=2\sum_{n=0}^{\infty} q^{{(n+1/2)}^2} \,,\qquad \theta_3(\tau)=1+2\sum_{n=1}^\infty q^{n^2}\,,  \quad\mbox{~~and~~}
\quad\theta_4(\tau)=1+2\sum_{n=1}^\infty (-1)^nq^{n^2}\,, 
$$
where $q=e^{i\pi\tau}$. For the sake of brevity we sometimes write $\theta_i$ instead of $\theta_i(\tau)$, $i=2,3,4$.
\smallskip

We define the $j$- function as follows;
$$
j(\tau)=256\frac{(\lambda^2-\lambda+1)^3}{\lambda^2(\lambda-1)^2},\quad \mbox{~~where~~} \lambda=\lambda(\tau)=\frac{\theta^4_2}{\theta^4_3},
$$
which is a modular function with respect to the group $SL(2,\mathbb{Z})$.
\bigskip

The motivation of this article comes from the following sources: In $2018$, C.Elsner and Y.Tachiya \cite{elsner} proved that for distinct integers $\ell, m$ and
$n$, the functions $\theta_3(\ell\tau), \theta_3(m\tau)$ and $\theta_3(n\tau)$ are {\em algebraically\/} dependent over $\mathbb{Q}$.  In 2019, 
C.\,Elsner, F.\,Luca and Y.\,Tachiya \cite{luca} proved the following: 
let $\tau$ be any complex number with Im$(\tau) >0$ such that $e^{i\pi\tau}$ is algebraic. Let $m,n\geq 1$ be distinct positive integers. Then the numbers 
$\theta_3(m\tau)$ and $\theta_3(n\tau)$ are {\em algebraically} independent over $\mathbb{Q}$.  In a subsequent paper \cite{elsner2} from 2020 this
occurs as a special case of \cite[Theorem 1.1]{elsner2}. 
\smallskip

Naturally the following two questions arise.
\smallskip

\noindent{\bf Question 1.~}
Let $m\geq 2$ and let $a_1, a_2, \ldots, a_m$ be distinct positive integers. Are the functions 
$$
\theta_3(a_1 \tau),\,\theta_3(a_2\tau),\,\ldots,\,\theta_3(a_m\tau)
$$
{\em linearly\/} independent over  $\mathbb{C(\tau)}$? \\
\smallskip
By \cite[Theorem\,1.1]{elsner2} we know that for distinct  positive integers $m,n$, and an algebraic number $e^{i\pi \tau}$ with $\tau \in \mathbb{H}$, the two 
numbers $\theta_3(m\tau)$ and $\theta_3(n\tau)$ are {\em algebraically\/} independent over ${\Q}$. Then, for algebraic numbers $\alpha_1$ and
$\alpha_2$, which do not vanish simultaneously, the linear form 
$$
\alpha_1\theta_3(m\tau) + \alpha_2\theta_3(n\tau)
$$
does not vanish. So it seems natural to consider linear forms 
in three values of the theta constant $\theta_3$.
\bigskip
 
\noindent{\bf Question 2.~}

What are the values of $\tau$ and $\alpha_0,\alpha_1,\alpha_2$ such that the linear form  
$$
L \,:=\, \alpha_0\theta_3(\tau) + \alpha_1\theta_3(m\tau) + \alpha_2\theta_3(n\tau)
$$ 
does not vanish?
\bigskip

In this article, we give the complete answer to the Question\,1 and answer the Question\,2 in the following way: We consider linear forms with
integers $n>m>1$, certain numbers $\tau \in \mathbb{H}$, and algebraic numbers $\alpha_0,\alpha_1, \alpha_2$, and give conditions on 
$\alpha_0,\alpha_1, \alpha_2$ such that $L\not= 0$. \\
We divide the remaining part of our article into three sections:
In Section\,2,  we  state our theorems, in Section\,3 we collect all the tools to prove our results, and in the last section we give the proof of all the theorems
from Section\,2.


\section{The results}
\label{Sec2}
\subsection{The linear independence over ${\C}(\tau)$ of the functions $\theta_3(a_1 \tau),\,\ldots,\,\theta_3(a_m\tau)$ in $\tau$} 
\[\]
We begin with the following result on the linear independence over ${\C}(\tau)$ of Jacobi-theta constants.
\begin{theorem}\label{maintheorem}
{\em (i)\/}\,\,Let $a_1, a_2, \ldots, a_m$ be distinct positive rational numbers. Then the $m$ functions 
$$
\theta_3(a_1 \tau), \quad \theta_3(a_2\tau),\ldots, \theta_3(a_m\tau)
$$ 
in $\tau \in {\H}$ are linearly independent over $\mathbb{C(\tau)}$. \\
{\em (ii)}\/\,\,Let $\alpha_1, \alpha_2, \ldots, \alpha_m$ be distinct positive real numbers. Then the $m$ functions 
$$
\theta_3(\alpha_1 \tau), \quad \theta_3(\alpha_2\tau),\ldots, \theta_3(\alpha_m\tau)
$$ 
are linearly independent over ${\C}$.
\end{theorem}

\subsection{On the linear independence of values of Jacobi-theta constants for \\ $\theta_3(\tau) ,\,\theta_3(m\tau),\,\theta_3(n\tau)$ with odd integers $m,n$}
\begin{theorem}
Let $3\leq n<m$ be two odd integers. If one of the following conditions holds, namely
\begin{enumerate}
\item 
$\tau\in\mathbb{H}$ is an algebraic number of degree $\geq 3$,
\item
$\tau\in\mathbb{H}$ such that $q=e^{i\pi\tau}$ is algebraic over $\mathbb{Q}$\,,
\end{enumerate}
then the three numbers 
$$ 
\theta_3(\tau),\quad\theta_3(m\tau),\quad\theta_3(n\tau)
$$
are  linearly independent over $\overline{\mathbb{Q}}$.
\label{Thm3.3.3}
\end{theorem}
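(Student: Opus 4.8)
The plan is to argue by contradiction. Suppose that
\[
\alpha_0\theta_3(\tau)+\alpha_1\theta_3(m\tau)+\alpha_2\theta_3(n\tau)=0
\]
for some $(\alpha_0,\alpha_1,\alpha_2)\in\overline{\mathbb Q}^{\,3}\setminus\{(0,0,0)\}$; write $\theta_0=\theta_3(\tau)$, $\theta_1=\theta_3(m\tau)$, $\theta_2=\theta_3(n\tau)$, and recall that $\theta_0\ne 0$ for $\tau\in\mathbb H$. The first step is to ``rationalize'' the relation. Rewriting it as $\alpha_0\theta_0+\alpha_1\theta_1=-\alpha_2\theta_2$ and squaring gives $2\alpha_0\alpha_1\theta_0\theta_1=\alpha_2^2\theta_2^2-\alpha_0^2\theta_0^2-\alpha_1^2\theta_1^2$; dividing by $\theta_0^2$ and squaring once more yields
\[
4\alpha_0^2\alpha_1^2\,\frac{\theta_3(m\tau)^2}{\theta_3(\tau)^2}=\left(\alpha_2^2\,\frac{\theta_3(n\tau)^2}{\theta_3(\tau)^2}-\alpha_0^2-\alpha_1^2\,\frac{\theta_3(m\tau)^2}{\theta_3(\tau)^2}\right)^{2}.
\]
The crucial point is that, apart from the algebraic numbers $\alpha_i$, this equation involves only the two ``multipliers'' $\theta_3(m\tau)^2/\theta_3(\tau)^2$ and $\theta_3(n\tau)^2/\theta_3(\tau)^2$, and that by the classical theory of modular equations — to be collected among the tools of Section~3 — each multiplier, regarded as a function of a variable $\sigma\in\mathbb H$ via $\theta_3(m\sigma)^2/\theta_3(\sigma)^2$, resp.\ $\theta_3(n\sigma)^2/\theta_3(\sigma)^2$, is algebraic over $\mathbb Q\bigl(\lambda(\sigma)\bigr)$. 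Hence, letting $R(\sigma)$ denote the holomorphic function obtained from the difference of the two sides of the last display upon replacing $\tau$ by $\sigma$ throughout, $R$ is algebraic over $\overline{\mathbb Q}\bigl(\lambda(\sigma)\bigr)$, and our hypothesis amounts exactly to the single statement $R(\tau)=0$.

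Next I would invoke the arithmetic hypotheses. In case~(1), $\tau$ is algebraic of degree $\ge 3$, hence not imaginary quadratic, so $j(\tau)$ is transcendental by Schneider's theorem; in case~(2), $q=e^{i\pi\tau}$ is algebraic with $0<|q|<1$, so $j(\tau)$ is transcendental by Nesterenko's theorem. In either case, since $j$ is a rational function of $\lambda$ by its defining formula in the introduction, $\lambda(\tau)$ is transcendental over $\mathbb Q$. Now fix a nonzero polynomial $P\in\overline{\mathbb Q}[X,Y]$ with $P\bigl(\lambda(\sigma),R(\sigma)\bigr)\equiv 0$ on $\mathbb H$ (available because $R$ is algebraic over $\overline{\mathbb Q}(\lambda(\sigma))$; there is no branch ambiguity, $\theta_3$ being holomorphic and zero-free on $\mathbb H$). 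Specializing at $\sigma=\tau$, where $R(\tau)=0$ but $\lambda(\tau)$ is transcendental, forces $P(X,0)\equiv 0$, i.e.\ $Y\mid P$; cancelling a factor $Y$ — legitimate since the ring of holomorphic functions on the connected domain $\mathbb H$ is an integral domain — and iterating, one concludes that $R\equiv 0$ on $\mathbb H$.

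It remains to derive a contradiction. Multiplying $R\equiv 0$ by $\theta_3(\sigma)^4$ and rearranging gives
\[
\bigl(\alpha_2^2\theta_3(n\sigma)^2-\alpha_0^2\theta_3(\sigma)^2-\alpha_1^2\theta_3(m\sigma)^2\bigr)^{2}=\bigl(2\alpha_0\alpha_1\,\theta_3(\sigma)\theta_3(m\sigma)\bigr)^{2},
\]
which, being a difference of two squares, factors completely into
\[
\prod_{\varepsilon_1,\varepsilon_2\in\{+1,-1\}}\bigl(\alpha_0\theta_3(\sigma)+\varepsilon_1\alpha_1\theta_3(m\sigma)+\varepsilon_2\alpha_2\theta_3(n\sigma)\bigr)\equiv 0 .
\]
Since $(\alpha_0,\alpha_1,\alpha_2)\ne(0,0,0)$, every one of the four factors is a nontrivial $\overline{\mathbb Q}$-linear combination of $\theta_3(\sigma),\theta_3(m\sigma),\theta_3(n\sigma)$, hence by Theorem~\ref{maintheorem}, applied to the distinct positive integers $1,n,m$, is not identically zero on $\mathbb H$; but a finite product of holomorphic functions on the connected domain $\mathbb H$, none of them identically zero, cannot vanish identically, which is the desired contradiction. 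I expect the main obstacle to lie in the modular-equation input — the algebraicity of the multipliers $\theta_3(m\tau)^2/\theta_3(\tau)^2$ and $\theta_3(n\tau)^2/\theta_3(\tau)^2$ over $\mathbb Q(\lambda(\tau))$ — which is presumably also where the hypothesis that $m$ and $n$ are odd really intervenes, through the explicit classical modular relations to be recorded in Section~3; by contrast the transcendence of $\lambda(\tau)$ is classical, the specialization step is routine, and the remainder is elementary algebra together with Theorem~\ref{maintheorem}.
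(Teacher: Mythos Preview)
Your argument is correct and follows a genuinely different route from the paper's proof.

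The paper argues directly with the explicit modular polynomials $P_m,P_n$ of Theorem\,\ref{Thm3.3}. After reducing to $\alpha_0\alpha_1\alpha_2\neq 0$, it forms the resultant
\[
R(X)=\mathrm{Res}_Y\Bigl(P_m\bigl(m^2(-\tfrac{\alpha_0}{\alpha_1}-\tfrac{\alpha_2}{\alpha_1}X)^4,16Y\bigr),\;P_n(n^2X^4,16Y)\Bigr)
\]
and must show $R\not\equiv 0$. This is done by contradiction: a vanishing resultant would force a common factor, and using irreducibility of $P_m(X,16\lambda(\tau))$ together with the explicit factorization of $P_m(X,0)$ and $P_n(X,0)$ from Lemma\,\ref{Lem3.2}, one compares root multiplicities and deduces $m\le n$, contradicting the hypothesis $n<m$. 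Then $R(\theta_3(n\tau)/\theta_3(\tau))=0$ makes this quotient algebraic, contradicting Lemma\,\ref{Lem3.4}.

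Your approach sidesteps the resultant machinery entirely. You use the modular equation only to know that the function $R(\sigma)$ is algebraic over $\overline{\mathbb Q}(\lambda(\sigma))$, invoke the transcendence of $\lambda(\tau)$ to promote the pointwise vanishing $R(\tau)=0$ to the identity $R\equiv 0$, and then feed the resulting factored identity into Theorem\,\ref{maintheorem}. This is cleaner in two respects: it never needs the irreducibility of $P_m(X,16\lambda(\tau))$ or the explicit shape of $P_m(X,0)$, and it does not use the ordering $n<m$ at all (only that $1,m,n$ are distinct and $m,n\ge 3$ are odd so that Theorem\,\ref{Thm3.3} applies). It also gives Theorem\,\ref{maintheorem} a substantive role in the proof of Theorem\,\ref{Thm3.3.3}, whereas in the paper the two results are logically independent. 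On the other hand, the paper's resultant technique is what drives the later Theorems\,\ref{Thm3.3.4} and~\ref{Thm3.3.5}, where one of $m,n$ is even and the argument via Theorem\,\ref{maintheorem} alone would not suffice; so the paper's method, though heavier here, is the one that extends.
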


\subsection{Results on linear forms $\alpha_0\theta_3(\tau) + \alpha_1\theta_3(m\tau) + \alpha_2\theta_3(n\tau)$ for $mn\equiv 0 \pmod 2$ under certain restrictions 
on the coefficients}
\[\]
Let $m,n$ be two distinct  positive integers, and let $\tau\in\mathbb{H}$ satisfying the conditions in Theorem\,\ref{Thm3.3.3}. Since the numbers $\theta_3(m\tau)$ 
and $\theta_3(n\tau)$ are algebraically independent when $e^{i\pi \tau}$ is algebraic by Theorem\,1.1 in \cite{elsner2}, we consider linear relations
\[\alpha_0 \theta_3(\tau) + \alpha_1\theta_3(m\tau) + \alpha_2\theta_3(n\tau) \]
for real algebraic numbers $\alpha_1,\alpha_2,\alpha_3$ with $\alpha_0\alpha_1\alpha_2 \not=0$. 
In order to state our next result we introduce the following set. Let $s\geq 3$ be any odd integer. Set
\[M_s \,:=\, \big\{ \pm \sqrt{u}, \pm i \sqrt{u}\,:\,\, u \in {\N}\,\wedge \, s\,\equiv \, 0 \pmod u \big\} \,.\]
\begin{theorem}
Let $m=2^a s_1$ and $n=2^b s_2$ be two distinct integers with $a,b\geq 1$ and odd integers $s_1,s_2\geq 3$. Let $\tau\in\mathbb{H}$ such that $e^{i\pi \tau}$
is an algebraic number. Then, the inequality
$$ 
\alpha_0\theta_3(\tau) + \alpha_1\theta_3(m\tau) + \alpha_2\theta_3(n\tau) \,\not= \, 0
$$ 
holds, if $\beta =\alpha_2\alpha_0^{-1}$ satisfies at least one of the following conditions:
$$
\beta^{-1} \,\not\in \, M_{s_2},\quad
\beta^4 \,\not\in \, {\Q}, \quad
 R_{n,0}(\beta^{-4})\neq 0 \,,
$$
where $R_{n,0}(X)$ is the polynomial defined by formula (\ref{eq2.1}) in Lemma\,\ref{Lem3.3.1} in Section 3.
\label{Thm3.3.4}
\end{theorem}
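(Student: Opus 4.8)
The plan is to argue by contradiction. Assume that for algebraic numbers $\alpha_0\neq 0$, $\alpha_1$, $\alpha_2$ one had
\[
L=\alpha_0\theta_3(\tau)+\alpha_1\theta_3(m\tau)+\alpha_2\theta_3(n\tau)=0 .
\]
Dividing by $\alpha_0$ and setting $\beta_1=\alpha_1\alpha_0^{-1}$, $\beta_2=\alpha_2\alpha_0^{-1}\in\overline{\mathbb{Q}}$, this becomes $\theta_3(\tau)=-\beta_1\theta_3(m\tau)-\beta_2\theta_3(n\tau)$. First I would dispose of the degenerate cases $\beta_1\beta_2=0$: if $\beta_2=0$ and $\beta_1\neq 0$, the relation is a nontrivial algebraic relation between $\theta_3(1\cdot\tau)$ and $\theta_3(m\tau)$, contradicting Theorem~1.1 of~\cite{elsner2} (applicable since $1\neq m$); and $\beta_1=\beta_2=0$ would force $\theta_3(\tau)=0$. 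Since $0\notin M_{s_1}\cap M_{s_2}$, both cases are consistent with the asserted conclusion, so from now on $\beta_1\beta_2\neq 0$. By contraposition it then suffices to prove: \emph{if $L=0$, then for each $\beta\in\{\beta_1,\beta_2\}$ one has simultaneously $\beta^4\in\mathbb{Q}$, $R_{n,0}(\beta^{-4})=0$, and $\beta\in M_{s_1}\cap M_{s_2}$.}

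Because $m=2^{a}s_1$ and $n=2^{b}s_2$ are even, the ratios $\theta_3(m\tau)^4/\theta_3(\tau)^4$ and $\theta_3(n\tau)^4/\theta_3(\tau)^4$ are values of modular functions, and Lemma~\ref{Lem3.3.1}, via the identity~(\ref{eq2.1}), provides polynomial relations over $\mathbb{Q}$ connecting each of them with $\lambda(\tau)$, with coefficient polynomials $R_{m,k}(X),R_{n,k}(X)\in\mathbb{Q}[X]$. Eliminating $\lambda(\tau)$ between the relation attached to $m$ and the one attached to $n$ (by taking the resultant in $\lambda$) yields a nonzero $F\in\mathbb{Q}[U,V]$ with
\[
F\!\left(\frac{\theta_3(m\tau)^4}{\theta_3(\tau)^4},\ \frac{\theta_3(n\tau)^4}{\theta_3(\tau)^4}\right)=0 .
\]
I would then substitute the linear relation. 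With $r:=\theta_3(n\tau)/\theta_3(m\tau)$ we have $\theta_3(\tau)=-\theta_3(m\tau)(\beta_1+\beta_2 r)$, hence $\theta_3(m\tau)^4/\theta_3(\tau)^4=(\beta_1+\beta_2 r)^{-4}$ and $\theta_3(n\tau)^4/\theta_3(\tau)^4=r^{4}(\beta_1+\beta_2 r)^{-4}$; clearing the denominator $(\beta_1+\beta_2 r)^{4\deg F}$ turns the identity above into $G(r)=0$ for some polynomial $G$ with coefficients in $\overline{\mathbb{Q}}$. Now Theorem~1.1 of~\cite{elsner2} says $\theta_3(m\tau)$ and $\theta_3(n\tau)$ are algebraically independent over $\mathbb{Q}$, hence over $\overline{\mathbb{Q}}$, so $r$ is transcendental over $\overline{\mathbb{Q}}$; therefore $G$ is the zero polynomial, i.e.\ every coefficient of $G$ (as a polynomial in $r$) vanishes.

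It remains to read the three conditions off from ``$G\equiv 0$''. The coefficients of the top and bottom powers of $r$ in $G$ are controlled by the specializations of $F$ in which one of the two ratios is sent to $0$, and these in turn are governed by the constant-term polynomials $R_{m,0}$, $R_{n,0}$ of~(\ref{eq2.1}) (which encode the behaviour of the ratios over the cusp $\lambda(\tau)\to 0$). Their vanishing gives $R_{m,0}(\beta^{-4})=R_{n,0}(\beta^{-4})=0$ for $\beta\in\{\beta_1,\beta_2\}$; in particular $\beta^{4}$ is a root of a nonzero rational polynomial, so $\beta^4\in\mathbb{Q}$. Finally, combining this with the explicit form of $R_{m,0}$ and $R_{n,0}$ from Lemma~\ref{Lem3.3.1} — whose relevant roots are of the shape $u^{-2}$ with $u$ running through divisors of $s_1$, resp.\ of $s_2$ (the $2$-power parts $2^{a}$, $2^{b}$ being handled by the Landen/duplication tower, the odd parts $s_1,s_2$ by the level-$s_i$ modular equations) — shows that $\beta^{4}=u^{2}$ for a common divisor $u$ of $s_1$ and $s_2$, that is $\beta^{2}=\pm u$, that is $\beta\in M_{s_1}\cap M_{s_2}$. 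This is the desired contradiction, so $L\neq 0$.

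The elimination of $\lambda(\tau)$ and the passage from $F$ to $G$ are routine. The main obstacle is the last paragraph: one must verify that the extreme coefficients of $G$ collapse, without parasitic cancellation, exactly to $R_{m,0}$, $R_{n,0}$ evaluated at $\beta^{-4}$, and that ``$G\equiv 0$'' has no non-arithmetic solutions — equivalently, that the roots of $R_{n,0}$ and $R_{m,0}$ are precisely the ones described above. This is the step where the evenness of $m$ and $n$ and the hypotheses $s_1,s_2\geq 3$ are genuinely used.
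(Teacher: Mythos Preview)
Your overall contradiction strategy --- assume $L=0$, use the modular polynomials $Q_m,Q_n$ from Lemma~\ref{Lem3.3.1} to eliminate $\lambda(\tau)$, and derive an algebraic relation contradicting transcendence --- matches the paper's. However, the route you take after eliminating $\lambda$ diverges, and the part you yourself flag as ``the main obstacle'' is precisely the gap.

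You form the bivariate resultant $F(U,V)=\mathrm{Res}_Y\big(Q_m(U,Y),Q_n(V,Y)\big)$, substitute $U=(\beta_1+\beta_2 r)^{-4}$, $V=r^4(\beta_1+\beta_2 r)^{-4}$, and then want to read off $R_{m,0}(\beta^{-4})=R_{n,0}(\beta^{-4})=0$ for \emph{both} $\beta\in\{\beta_1,\beta_2\}$ from the extreme coefficients of $G$. But the constant term of $G$ is (up to a nonzero factor) $F(\beta_1^{-4},0)$, and since $Q_n(0,Y)=c_n^{2^b}Y^{2^b\psi(s_2)}$ one finds $F(U,0)=(\mathrm{const})\cdot R_{m,0}(U)^{2^b\psi(s_2)}$; so you only get $R_{m,0}(\beta_1^{-4})=0$. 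Likewise the leading coefficient yields $R_{n,0}(\beta_2^{-4})=0$. This is \emph{two} constraints, not the four you claim, and in particular does not force $\beta_1\in M_{s_2}$ or $\beta_2\in M_{s_1}$ --- so you cannot conclude $\beta\in M_{s_1}\cap M_{s_2}$ for both $\beta$. Extracting further constraints from intermediate coefficients of $G$ runs exactly into the ``parasitic cancellation'' you worry about and would require a detailed structural analysis you have not supplied.

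The paper bypasses this entirely. Instead of first eliminating $\lambda$ and then substituting, it keeps $X=\theta_3(n\tau)/\theta_3(\tau)$ as the free variable and forms
\[
W(X)=\mathrm{Res}_Y\Big(Q_m\big((-\tfrac{\alpha_0}{\alpha_1}-\tfrac{\alpha_2}{\alpha_1}X)^4,\,Y\big),\ Q_n(X^4,Y)\Big)\in\overline{\mathbb{Q}}[X],
\]
noting that the $Y$-degrees are constant in $X$ by (\ref{eq2.1}). The key trick is to test $W$ at the single point $\eta=-\alpha_0/\alpha_2$: then the inner argument of $Q_m$ collapses to $0$, so the first factor becomes $Q_m(0,Y)=c_m^{2^a}Y^{2^a\psi(s_1)}$ by (\ref{eq2.3}), whose only root is $Y=0$. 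Coprimality with $Q_n(\eta^4,Y)$ thus reduces to the single condition $Q_n(\eta^4,0)=R_{n,0}(\beta^{-4})\neq 0$, which is exactly the hypothesis (and via (\ref{eq2.4}) and Lemma~\ref{Lem3.2} is equivalent to $\beta\notin M_{s_2}$). Hence $W\not\equiv 0$, and $W(\theta_3(n\tau)/\theta_3(\tau))=0$ contradicts Lemma~\ref{Lem3.4}. The specialization $\eta=-\alpha_0/\alpha_2$ is the idea you are missing; it converts the hard coefficient analysis into a one-line evaluation.
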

In the case when additionally $s_1$ and $s_2$ are coprime odd integers, we have $M_{s_1} \cap M_{s_2} = \{ \pm 1,\,\pm i \}$. Then we obtain from 
Theorem\,\ref{Thm3.3.4} the following corollary.
\begin{corollary}
Let $m=2^a s_1$ and $n=2^b s_2$ be two distinct  integers with $a,b\geq 1$ and odd coprime integers $s_1,s_2\geq 3$. Let $\tau\in\mathbb{H}$ such 
that $e^{i\pi \tau}$ is an algebraic number. Then, the inequality
$$
\alpha_0\theta_3(\tau) + \alpha_1\theta_3(m\tau) + \alpha_2\theta_3(n\tau) \,\not= \, 0
$$
holds for all algebraic numbers $\alpha_0,\alpha_1,\alpha_2$, whenever at least one of the numbers $\alpha_0/\alpha_1$ or $\alpha_0/\alpha_2$ is not a  
unit in  the ring of  Gaussian integers ${\Z}[i]$.
\label{Cor2.1}
\end{corollary}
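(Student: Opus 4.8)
The plan is to obtain Corollary~\ref{Cor2.1} as an immediate consequence of Theorem~\ref{Thm3.3.4} together with an explicit description of the intersection $M_{s_1}\cap M_{s_2}$ for coprime odd integers $s_1,s_2\geq 3$. The first step is to verify the set-theoretic identity
$$
M_{s_1}\cap M_{s_2} \,=\, \{\pm 1,\,\pm i\}\,.
$$
Indeed, if $\beta\in M_{s_1}\cap M_{s_2}$, then there exist natural numbers $u_1\mid s_1$ and $u_2\mid s_2$ with $\beta^{2}\in\{u_1,-u_1\}$ and simultaneously $\beta^{2}\in\{u_2,-u_2\}$; comparing absolute values forces $u_1=u_2$, and hence $u_1=u_2\mid\gcd(s_1,s_2)=1$. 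Thus $u_1=u_2=1$ and $\beta\in\{\pm1,\pm i\}$, while the reverse inclusion is clear since $1\mid s_j$ for $j=1,2$. The second step is to recall that the unit group of the Gaussian integers $\mathbb{Z}[i]$ is precisely $\mathbb{Z}[i]^{\times}=\{\pm1,\pm i\}$, so that the set appearing above coincides with $\mathbb{Z}[i]^{\times}$.

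With these two observations the corollary follows at once. Let $\alpha_0,\alpha_1,\alpha_2$ be algebraic numbers such that $\beta\in\{\alpha_1/\alpha_0,\,\alpha_2/\alpha_0\}$ is not a unit of $\mathbb{Z}[i]$ (in particular $\alpha_0\neq0$, so that the ratios are well defined). By the identity just proved, $\beta\notin M_{s_1}\cap M_{s_2}$, which is exactly the first of the three alternative hypotheses in Theorem~\ref{Thm3.3.4}. Since $m=2^{a}s_1$ and $n=2^{b}s_2$ with $a,b\geq1$ and odd $s_1,s_2\geq3$, Theorem~\ref{Thm3.3.4} applies and yields
$$
\alpha_0\theta_3(\tau)+\alpha_1\theta_3(m\tau)+\alpha_2\theta_3(n\tau)\,\neq\,0\,,
$$
as claimed.

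The entire analytic and Diophantine content has already been absorbed into Theorem~\ref{Thm3.3.4}, so no serious obstacle remains in the proof of the corollary itself; the only point that requires care is the elementary computation of $M_{s_1}\cap M_{s_2}$, where one must treat both the ``real'' elements $\pm\sqrt{u}$ and the ``imaginary'' elements $\pm i\sqrt{u}$ and genuinely use coprimality to collapse the common divisor to $1$. For completeness one may also note the degenerate case $\alpha_0=0$: then $\alpha_1,\alpha_2$ are not both zero, and the non-vanishing of $\alpha_1\theta_3(m\tau)+\alpha_2\theta_3(n\tau)$ is already guaranteed by the algebraic independence of $\theta_3(m\tau)$ and $\theta_3(n\tau)$ over $\mathbb{Q}$ (Theorem~1.1 in \cite{elsner2}), so the hypothesis on Gaussian units is needed only in the principal case $\alpha_0\neq0$.
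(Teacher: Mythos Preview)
Your proposal is correct and follows exactly the approach the paper indicates: the paper simply remarks before the corollary that for coprime odd $s_1,s_2$ one has $M_{s_1}\cap M_{s_2}=\{\pm1,\pm i\}$ and that the corollary then follows from Theorem~\ref{Thm3.3.4}. Your write-up just makes this one-line observation explicit, including the verification of the intersection and the identification with ${\Z}[i]^{\times}$.
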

For any  integer $n>1$ let
$$
\psi(n):= n\prod_{p|n}\left(1+\frac{1}{p}\right)\,,
$$ 
where $p$ runs through all primes dividing $n$ and for $n=1$, we define $\psi(1)=1$.
\begin{theorem}
Let $m=2^a s$ be an integer   with $a\geq 1$ and an odd integer $s\geq 3$ and let $n\geq 3$ be an odd integer. Let $\tau\in\mathbb{H}$ be as in Theorem\,\ref{Thm3.3.3}.
Then, the inequality
$$
\alpha_0\theta_3(\tau) + \alpha_1\theta_3(m\tau) + \alpha_2\theta_3(n\tau) \,\not= \, 0
$$
holds, if $\beta := \alpha_2\alpha_0^{-1}$ satisfies one of the following conditions:
$$
 \deg_{\Q} \big( \beta^4\,\big) \,>\, \psi(n),\quad
S_{n,0}(n^2\beta^{-4})S_{n,d_n}(n^2\beta^{-4}) \,\not= \, 0\,,
$$
where $S_{n,0}(X)$ and $S_{n,d_n}(X)$ are polynomials defined in (\ref{EQN1}) in Theorem\,\ref{Thm3.3} below.
\label{Thm3.3.5}
\end{theorem}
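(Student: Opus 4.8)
\emph{Sketch of the argument.} My plan is to argue by contradiction: suppose that
\[\alpha_0\theta_3(\tau)+\alpha_1\theta_3(m\tau)+\alpha_2\theta_3(n\tau)=0.\]
Since $\beta=\alpha_2\alpha_0^{-1}$ is defined we have $\alpha_0\neq0$, and after dividing by $\alpha_0$ I may assume $\alpha_0=1$, $\alpha_2=\beta$, and put $\gamma:=\alpha_1$. The degenerate cases go away at once: if $\beta=0$ then $\beta^4=0$ has degree $1\le\psi(n)$ over $\Q$ and $\beta^{-4}$ is undefined, so neither hypothesis applies; and $\gamma=0$ would give a two--term relation $\theta_3(\tau)+\beta\theta_3(n\tau)=0$, which is impossible because $\theta_3(\tau)$ and $\theta_3(n\tau)$ are algebraically independent over $\Q$ (by \cite{luca} when $q=e^{i\pi\tau}$ is algebraic, and by the corresponding statement when $\tau$ is algebraic of degree $\ge3$). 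So $\beta\gamma\neq0$ henceforth.

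Next I would bring in the algebraic relation among the three functions. Since $\theta_3(m\tau)^4/\theta_3(\tau)^4$ and $\theta_3(n\tau)^4/\theta_3(\tau)^4$ are modular functions on a common congruence subgroup, the functions $\theta_3(\tau),\theta_3(m\tau),\theta_3(n\tau)$ generate a field of transcendence degree $2$ over $\Q$ (their fourth roots being algebraic over the function field of the associated modular curve); hence, as in \cite{elsner}, there is a polynomial $P\in\Q[X,Y,Z]$ of least degree with $P\big(\theta_3(\tau),\theta_3(m\tau),\theta_3(n\tau)\big)\equiv0$ on $\H$, and its precise shape, dictated by the modular equation of level $n$, is supplied by Lemma\,\ref{Lem3.3.1} and Theorem\,\ref{Thm3.3} --- this is where the polynomials $S_{n,k}$ and the number $\psi(n)$ come from. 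Now I invoke Theorem\,1.1 of \cite{luca} (or its analogue when $\tau$ is algebraic): $\theta_3(m\tau)$ and $\theta_3(n\tau)$ are algebraically independent over $\Q$, hence over $\overline{\Q}$. Substituting $\theta_3(\tau)=-\gamma\theta_3(m\tau)-\beta\theta_3(n\tau)$ into $P=0$ then forces $P(-\gamma Y-\beta Z,Y,Z)\in\overline{\Q}[Y,Z]$ to vanish identically, that is,
\[(X+\gamma Y+\beta Z)\mid P(X,Y,Z)\qquad\text{in }\overline{\Q}[X,Y,Z].\]

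It remains to read off the arithmetic constraint on $\beta$, and this is where the parity hypotheses are used. Under $\tau\mapsto\tau+1$ the value $\theta_3(m\tau)$ is unchanged (because $2\mid m$) while $\theta_3(\tau)\mapsto\theta_4(\tau)$ and $\theta_3(n\tau)\mapsto\theta_4(n\tau)$ (because $2\nmid n$), so $P$ is also annihilated by $\big(\theta_4(\tau),\theta_3(m\tau),\theta_4(n\tau)\big)$; combining this with the identities $\theta_3(\sigma)-\theta_4(\sigma)=2\theta_2(4\sigma)$ and $\theta_4(\sigma)^4=(1-\lambda(\sigma))\,\theta_3(\sigma)^4$, one tracks the factor $X+\gamma Y+\beta Z$ through these symmetries to obtain $\beta^{-4}=\theta_2(4n\tau)^4/\theta_2(4\tau)^4$, so that $n^2\beta^{-4}$ is precisely the value substituted into the identity (EQN1) of Theorem\,\ref{Thm3.3}, which exhibits that theta quotient as a root --- of degree $\psi(n)$ over $\Q(J)$, with $J$ the relevant Hauptmodul --- of the polynomial $\sum_{k=0}^{d_n}S_{n,k}(X)\,Y^k$. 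Hence $\sum_{k=0}^{d_n}S_{n,k}(n^2\beta^{-4})\,J(\tau)^k=0$, while $J(\tau)$ is transcendental over $\Q$ --- a standard consequence of the transcendence of $j(\tau)$, which holds by Nesterenko's theorem when $q$ is algebraic and by Schneider's theorem (as $\tau$ is then not a CM point) when $\tau$ is algebraic of degree $\ge3$. If $S_{n,0}(n^2\beta^{-4})S_{n,d_n}(n^2\beta^{-4})\neq0$, the relation just displayed is a non--trivial polynomial equation over $\overline{\Q}$ satisfied by the transcendental number $J(\tau)$, which is absurd; and if $\deg_{\Q}(\beta^4)>\psi(n)$, then, $J(\tau)$ being transcendental, $\Q(J(\tau))$ and $\Q(\beta^4)$ are linearly disjoint over $\Q$, so $\deg_{\Q}(\beta^4)=\deg_{\Q}(n^2\beta^{-4})\le\psi(n)$, again absurd. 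Either way we obtain a contradiction.

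The step I expect to be the real obstacle is the third paragraph: getting from the bare divisibility $(X+\gamma Y+\beta Z)\mid P$ to the exact identification of $n^2\beta^{-4}$ with the argument of (EQN1). That is precisely where the evenness of $m$ and the oddness of $n$ are essential --- for odd $m,n$ the corresponding $P$ has no linear factor, which is Theorem\,\ref{Thm3.3.3} --- and it needs the explicit description of the polynomials $S_{n,k}$ from Theorem\,\ref{Thm3.3}. The rest (the normalisations, the appeals to \cite{luca} and \cite{elsner}, and the transcendence input) is routine.
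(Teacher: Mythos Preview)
Your first two paragraphs are sound: the reduction to $\alpha_0\alpha_1\alpha_2\neq 0$ is correct, and the divisibility $(X+\gamma Y+\beta Z)\mid P(X,Y,Z)$ in $\overline{\Q}[X,Y,Z]$ does follow from the algebraic independence of $\theta_3(m\tau)$ and $\theta_3(n\tau)$ over $\overline{\Q}$, once such a polynomial $P$ has been fixed. But the third paragraph --- which you yourself flag as ``the real obstacle'' --- is not an argument; it is a hope. The identity you write down, $\beta^{-4}=\theta_2(4n\tau)^4/\theta_2(4\tau)^4$, equates a fixed algebraic number to a quantity that varies with $\tau$, and you give no indication why the divisibility together with the shift $\tau\mapsto\tau+1$ should force this at the particular $\tau$ in question. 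Likewise, the relation $\sum_{k}S_{n,k}(n^2\beta^{-4})\,J(\tau)^k=0$ would follow from (3.2) of Theorem~\ref{Thm3.3} only if $\theta_3^4(n\tau)/\theta_3^4(\tau)=\beta^{-4}$, which the linear relation certainly does not give (it gives $\theta_3(n\tau)/\theta_3(\tau)=-\beta^{-1}-\beta^{-1}\gamma\,\theta_3(m\tau)/\theta_3(\tau)$). The link between the linear factor of $P$ and the polynomials $S_{n,k}$ is simply missing.

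The paper proceeds quite differently and never attempts to factor an unspecified dependence polynomial. Instead it takes the two explicit relations
\[
Q_m\!\left(\frac{\theta_3^4(m\tau)}{\theta_3^4(\tau)},\,\frac{\theta_2^4(\tau)}{\theta_3^4(\tau)}\right)=0,
\qquad
P_n\!\left(n^2\frac{\theta_3^4(n\tau)}{\theta_3^4(\tau)},\,16\frac{\theta_2^4(\tau)}{\theta_3^4(\tau)}\right)=0,
\]
substitutes the linear relation into the first to form $H_m(X,Y):=Q_m\big((-\alpha_0/\alpha_1-(\alpha_2/\alpha_1)X)^4,\,Y\big)$, and sets
\[
W(X):=\operatorname{Res}_Y\big(H_m(X,Y),\,P_n(n^2X^4,16Y)\big)\in\overline{\Q}[X].
\]
The crucial idea is the evaluation point $\eta:=-\alpha_0/\alpha_2=-\beta^{-1}$, chosen precisely so that the inner argument of $Q_m$ collapses to $0$; by (3.7) this gives $H_m(\eta,Y)=c_m^{2^a}Y^{2^a\psi(s)}$, whose only zero is $Y=0$. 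The hypothesis on $\beta$ then enters exactly to ensure $P_n(n^2\eta^4,0)=S_{n,0}(n^2\beta^{-4})\neq 0$ (so $Y=0$ is not a common root) and $S_{n,d_n}(n^2\beta^{-4})\neq 0$ (so the $Y$-degree, hence the Sylvester matrix, is stable at $\eta$). Thus $W(\eta)\neq 0$, so $W\not\equiv 0$; but $W$ vanishes at the transcendental number $\theta_3(n\tau)/\theta_3(\tau)$ (Lemma~\ref{Lem3.4}), a contradiction. This is where the evenness of $m$ (giving the clean form (3.7) for $Q_m(0,Y)$) and the explicit structure (3.2)--(3.4) for $P_n$ are actually used --- not via any shift $\tau\mapsto\tau+1$.
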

The preceding theorems do not treat $\theta_3(\tau),\theta_3(2\tau)$, and $\theta_3(3\tau)$ simultaneously. For this situation we cite a result from 
\cite[Example\,1.5]{elsner}: Let $\tau \in \mathbb{H}$, and define
\[P(X,Y,Z) \,:=\, 27X^8 - 18X^4Y^4 - 64X^2Y^4Z^2 + 64X^2Y^2Z^4 - 8X^2Z^6 - Z^8 \,.\]
Then $P(X_0,Y_0,Z_0)=0$ holds for
\[X_0 \,:=\, \theta_3(3\tau)\,,\quad Y_0 \,:=\, \theta_3(2\tau)\,,\quad Z_0 \,:=\, \theta_3(\tau)\,.\]
This shows that $\theta_3(\tau),\theta_3(2\tau),\theta_3(3\tau)$ are homogeneously algebraically dependent of degree 8. 
\begin{proposition}
Let $m\geq0$ be an integer. Let $\tau\in\mathbb{H}$ be as in Theorem\,\ref{Thm3.3.3}. Then the three numbers 
$$
\theta_3(2^m\tau),\quad \theta_3(2^{m+1}\tau),\quad \theta_3(2^{m+2}\tau)
$$ 
are  linearly independent over $\overline{\mathbb{Q}}$.
\label{Prop1}
\end{proposition}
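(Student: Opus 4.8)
\medskip

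The plan is to reduce to the case $m=0$ and then combine two elementary theta identities with the transcendence facts that underlie Theorem~\ref{Thm3.3.3}. Put $\sigma:=2^m\tau$. If $\tau$ is algebraic of degree $\ge 3$ then so is $2^m\tau$; if $e^{i\pi\tau}$ is algebraic then so is $e^{i\pi\sigma}=(e^{i\pi\tau})^{2^m}$; in either case $\sigma\in\mathbb{H}$ satisfies the hypotheses of Theorem~\ref{Thm3.3.3}, and $\theta_3(2^{m+k}\tau)=\theta_3(2^k\sigma)$ for $k=0,1,2$. So it suffices to prove that $\theta_3(\tau),\theta_3(2\tau),\theta_3(4\tau)$ are $\overline{\mathbb{Q}}$-linearly independent. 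I would then record the identities
\[\theta_3(4\tau)=\tfrac12\bigl(\theta_3(\tau)+\theta_4(\tau)\bigr),\qquad 2\,\theta_3(2\tau)^2=\theta_3(\tau)^2+\theta_4(\tau)^2 ,\]
which are immediate from the $q$-expansions (split $\sum_{n\in\mathbb{Z}}q^{n^2}$ according to the parity of $n$ for the first; use the bijection $(a,b)\mapsto(a+b,a-b)$ of $\mathbb{Z}^{2}$ onto $\{(m,n):m\equiv n\pmod 2\}$ for the second). Assuming for contradiction that $\alpha_0\theta_3(\tau)+\alpha_1\theta_3(2\tau)+\alpha_2\theta_3(4\tau)=0$ with $(\alpha_0,\alpha_1,\alpha_2)\in\overline{\mathbb{Q}}^{\,3}\setminus\{0\}$, the first identity rewrites this as the relation
\[\Bigl(\alpha_0+\tfrac{\alpha_2}{2}\Bigr)\theta_3(\tau)+\alpha_1\theta_3(2\tau)+\tfrac{\alpha_2}{2}\,\theta_4(\tau)=0 ,\qquad(\star)\]
which I analyse under each of the two hypotheses.

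Under (1): since $\tau$ has degree $\ge 3$ it is not imaginary quadratic, so $j(\tau)$ is transcendental by Schneider's theorem; hence $\lambda(\tau)=\theta_2^4/\theta_3^4$ is transcendental, and because $\bigl(\theta_4(\tau)/\theta_3(\tau)\bigr)^4=\theta_4^4/\theta_3^4=1-\lambda(\tau)$ (by Jacobi's identity $\theta_2^4+\theta_4^4=\theta_3^4$) the ratio $w:=\theta_4(\tau)/\theta_3(\tau)$ is transcendental. If $\alpha_1\ne 0$, I would isolate the term $\alpha_1\theta_3(2\tau)$ in $(\star)$, square, substitute $2\theta_3(2\tau)^2=\theta_3(\tau)^2+\theta_4(\tau)^2$, and divide by $\theta_3(\tau)^2$ (which is nonzero on $\mathbb{H}$, e.g.\ by the Jacobi triple product); this yields a polynomial of degree $\le 2$ in $w$ with coefficients in $\overline{\mathbb{Q}}$, all of which must vanish. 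A short case distinction on the vanishing of the coefficient of $w$ --- namely $\alpha_2=0$ or $2\alpha_0+\alpha_2=0$ --- fed back into the remaining two equations forces $\alpha_0=\alpha_1=\alpha_2=0$, a contradiction. If $\alpha_1=0$, then $(\star)$ directly shows $w\in\overline{\mathbb{Q}}$ unless $\alpha_2=0$, and $\alpha_2=0$ in turn forces $\alpha_0=0$; a contradiction again.

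Under (2): when $q=e^{i\pi\tau}$ is algebraic, Theorem~1.1 of \cite{elsner2} (applied with $m=1$, $n=2$) gives that $\theta_3(\tau)$ and $\theta_3(2\tau)$ are algebraically independent over $\mathbb{Q}$, hence also over $\overline{\mathbb{Q}}$. Now I would isolate the term $\tfrac{\alpha_2}{2}\theta_4(\tau)$ in $(\star)$, square, and replace $\theta_4(\tau)^2$ by $2\theta_3(2\tau)^2-\theta_3(\tau)^2$. The result is a polynomial relation between $\theta_3(\tau)$ and $\theta_3(2\tau)$ over $\overline{\mathbb{Q}}$, which must therefore be the zero polynomial; comparing its three coefficients and running the same short case analysis once more gives $\alpha_0=\alpha_1=\alpha_2=0$, contradicting the assumption. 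This completes the argument.

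The main obstacle --- indeed the only delicate point --- is that the squaring steps need not a priori preserve genuine nonvanishing, so one has to check that the vanishing of \emph{all} coefficients of the squared relation really does force the original triple $(\alpha_0,\alpha_1,\alpha_2)$ to be zero. This is exactly what the small case distinctions (on whether $\alpha_2=0$, respectively $2\alpha_0+\alpha_2=0$) take care of; everything else is routine manipulation of the stated identities together with the two cited transcendence results.
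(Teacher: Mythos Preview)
Your proof is correct and follows essentially the same route as the paper: reduce to $m=0$, apply the two theta identities $2\theta_3(4\tau)=\theta_3+\theta_4$ and $2\theta_3(2\tau)^2=\theta_3^2+\theta_4^2$, square out the assumed linear relation, and force the resulting algebraic coefficients to vanish via a transcendence input. The only noteworthy difference is that the paper handles both hypotheses on $\tau$ uniformly through Lemma~\ref{Lem3.3} (the transcendence of $\theta_4/\theta_3$), whereas you split into cases and in case~(2) invoke the heavier fact that $\theta_3(\tau)$ and $\theta_3(2\tau)$ are algebraically independent; both arguments work, but the paper's uniform appeal to Lemma~\ref{Lem3.3} makes the case split and the separate computation in~(2) unnecessary.
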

In Proposition\,\ref{Prop1} it is not possible to avoid the condition $\deg_{\Q}(\tau)\geq 3$. This follows for $\tau=i$ and $m=0$ from the nontrivial relation
\[\sqrt{2+\sqrt{2}}\,\theta_3(i) - 2\theta_3(2i) \,=\, 0 \]
due to Ramanujan, cf. \cite[p.\,325]{Berndt}. We can also find similar linear relations for the theta-constants $\theta_2$ and $\theta_4$:
\[\sqrt{2-\sqrt{2}}\,\theta_2(i) - \sqrt[4]{8}\,\theta_2(2i) \,=\, 0 \,,\qquad \sqrt[8]{2}\,\theta_4(i)- \theta_4(2i) \,=\, 0\,.\]
Moreover, we have for $\tau = 1+i\sqrt{3}$ (with $q=-e^{-\pi \sqrt{3}}$\,) the following linear identity involving values of $\theta_2$ and $\theta_3$,
\[2\theta_2\big( \,1+i\sqrt{3}\,\big) - (1+i)\sqrt[4]{28-16\sqrt{3}}\,\theta_3(1+i\sqrt{3}) \,=\, 0 \,.\]


\section{Main Tools towards the proof of our results}
\label{Sec3}
\begin{theorem} \cite[page 5]{Schneider} \\ 
Let $\tau\in\mathbb{H}$ be an algebraic number of  degree different than  $2$. Then $j(\tau)$ is transcendental.
\label{Thm3.1}
\end{theorem}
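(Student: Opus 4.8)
The plan is to deduce this statement, by contradiction, from Schneider's classical theorem on the period quotients of elliptic curves with algebraic invariants. Since $\tau\in\H$ is non-real, the hypothesis $\deg_{\Q}\tau\neq 2$ forces $\deg_{\Q}\tau\geq 3$; in particular $j_0:=j(\tau)\notin\{0,1728\}$, for otherwise $\tau$ would be $SL(2,\Z)$-equivalent to $i$ (if $j_0=1728$) or to $e^{2\pi i/3}$ (if $j_0=0$), hence algebraic of degree $2$ over $\Q$. Assume, for contradiction, that $j_0$ is algebraic.

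First I would realise $j_0$ as the $j$-invariant of an elliptic curve defined over $\overline{\Q}$ in Weierstrass form. Put $c:=27j_0/(j_0-1728)$; then $c\in\overline{\Q}$ with $c\neq 0$ and $c\neq 27$, so $E\colon\, y^2=4x^3-cx-c$ is a nonsingular cubic with invariants $g_2=g_3=c$ and $j(E)=1728\,c^3/(c^3-27c^2)=1728\,c/(c-27)=j_0$. Let $\Lambda$ be the period lattice of $E$, so that $z\mapsto(\wp(z;\Lambda),\wp'(z;\Lambda))$ induces an isomorphism $\C/\Lambda\cong E(\C)$ and $g_2(\Lambda)=g_3(\Lambda)=c$ are algebraic. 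Since $j(\Lambda)=j_0=j(\Z\tau+\Z)$ and two lattices with equal $j$-invariant are homothetic, we may write $\Lambda=\mu(\Z\tau+\Z)$ for some $\mu\in\C^{\times}$; with the basis $\omega_1:=\mu\tau$, $\omega_2:=\mu$ of $\Lambda$ we then have $\omega_1/\omega_2=\tau\in\H$.

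Next I would invoke Schneider's theorem \cite{Schneider} in its ``period'' form: if a Weierstrass $\wp$-function has algebraic invariants $g_2,g_3$, then the quotient of any two $\R$-linearly independent periods is either transcendental or an imaginary quadratic number (equivalently, $\C/\Lambda$ has complex multiplication). Applied to the lattice $\Lambda$ above, and using that $\omega_1/\omega_2=\tau$ is algebraic, this forces $\tau$ to be imaginary quadratic, i.e.\ $\deg_{\Q}\tau=2$, contradicting $\deg_{\Q}\tau\geq 3$. Hence $j(\tau)$ is transcendental. (Alternatively, one may apply W\"ustholz's analytic subgroup theorem to the period lattice of $E$; it contains Schneider's statement as a special case.)

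I expect the main obstacle to be the transcendence input rather than the reduction. The elliptic-curve bookkeeping above---producing $E$ over $\overline{\Q}$, identifying its period lattice with a homothet of $\Z\tau+\Z$, and reading off $\omega_1/\omega_2=\tau$---is entirely routine. By contrast, Schneider's period theorem is a genuinely deep result, proved by constructing an auxiliary polynomial in $z$ and $\wp(z)$ vanishing to high order at lattice points, invoking Siegel's lemma, and running an extrapolation argument together with an elliptic zero estimate. Since the present article uses the theorem only as a cited black box, nothing beyond the reduction is required here; a self-contained treatment would have to reproduce that machinery.
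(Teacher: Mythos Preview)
The paper does not prove Theorem~\ref{Thm3.1}; it is quoted as a classical result of Schneider and used only as a black box (see the citation marker at the head of the statement). Your proposal is therefore not competing with any argument in the paper, and your own closing paragraph already acknowledges this.

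That said, your reduction is the standard and correct one. The bookkeeping is clean: ruling out $j_0\in\{0,1728\}$ via the $SL(2,\Z)$-orbits of $i$ and $e^{2\pi i/3}$, building $E:y^2=4x^3-cx-c$ with $c=27j_0/(j_0-1728)\in\overline{\Q}$, checking nonsingularity ($c\neq 0,27$), and identifying the period lattice with a homothet of $\Z\tau+\Z$ so that $\omega_1/\omega_2=\tau$. Schneider's period theorem then forces $\tau$ to be imaginary quadratic, contradicting $\deg_{\Q}\tau\geq 3$. Nothing is missing at the reduction level; the substantial input is exactly Schneider's transcendence machinery, which you correctly flag as the real content and which the present paper simply cites.
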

\begin{theorem} \cite[Theorem 4]{Bertrand} \\
For any $\tau\in\mathbb{H}$,  if $q=e^{i\pi\tau}$ is an algebraic number, and for integers $j,k,\ell \in \{ 2,3,4\}$ with $j\not= k$, then the three values 
$\theta_j(\tau)$, $\theta_k(\tau)$  and $D\theta_{\ell}(\tau)$ are algebraically independent over $\mathbb{Q}$. Here, 
\[D \,:=\, \frac{1}{\pi i}\frac{d}{d\tau} \]
is a differential operator.
\label{Thm3.2}
\end{theorem}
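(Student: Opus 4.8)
The plan is to reduce the assertion to Nesterenko's theorem on the algebraic independence of values of Eisenstein series, using that $\theta_2^4,\theta_3^4,\theta_4^4$ are modular forms of weight $2$ and that the logarithmic derivatives $D\theta_i/\theta_i$ carry the quasimodular form $E_2$. As a first step I would put $Q:=q^2=e^{2\pi i\tau}$, so that $0<|Q|<1$ and $Q$ is algebraic whenever $q$ is; Nesterenko's theorem then gives $\operatorname{trdeg}_{\mathbb{Q}}\mathbb{Q}\big(Q,E_2(Q),E_4(Q),E_6(Q)\big)\ge 3$, and since $Q$ is algebraic this forces $E_2(Q),E_4(Q),E_6(Q)$ to be algebraically independent over $\mathbb{Q}$.

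Next I would translate this into theta constants by means of the classical identities (with $D=\tfrac1{\pi i}\tfrac{d}{d\tau}$)
\[ E_4=\tfrac12\big(\theta_2^8+\theta_3^8+\theta_4^8\big),\qquad E_6=\tfrac12\big(\theta_2^4+\theta_3^4\big)\big(\theta_3^4+\theta_4^4\big)\big(\theta_4^4-\theta_2^4\big), \]
\[ 12\tfrac{D\theta_2}{\theta_2}=E_2+\theta_3^4+\theta_4^4,\qquad 12\tfrac{D\theta_3}{\theta_3}=E_2+\theta_2^4-\theta_4^4,\qquad 12\tfrac{D\theta_4}{\theta_4}=E_2-\theta_2^4-\theta_3^4, \]
together with Jacobi's identity $\theta_3^4=\theta_2^4+\theta_4^4$ and the relations $j=1728E_4^3/(E_4^3-E_6^2)$ and $j=256(\lambda^2-\lambda+1)^3/(\lambda^2(\lambda-1)^2)$ with $\lambda=\theta_2^4/\theta_3^4$. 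On the one hand these show $\mathbb{Q}(E_2,E_4,E_6)\subseteq\mathbb{Q}(\theta_2^4,\theta_3^4,\theta_4^4,E_2)$; on the other hand $\lambda$ is algebraic over $\mathbb{Q}(E_4,E_6)$, hence so is $\theta_3^{24}=256\Delta/\big(\lambda^2(1-\lambda)^2\big)$ with $\Delta=(E_4^3-E_6^2)/1728$, and therefore so are $\theta_3^4$ and then $\theta_2^4=\lambda\theta_3^4$, $\theta_4^4=(1-\lambda)\theta_3^4$. Thus $\mathbb{Q}(E_2,E_4,E_6)$ and $\mathbb{Q}(\theta_2^4,\theta_3^4,\theta_4^4,E_2)$ have the same algebraic closure, which by the first step has transcendence degree $3$ over $\mathbb{Q}$.

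Finally I would pass from the fourth powers to the values themselves. Fix $j\ne k$ and any $\ell$ in $\{2,3,4\}$. Each $\theta_i$ is algebraic of degree $\le4$ over $\mathbb{Q}(\theta_i^4)$; by Jacobi's identity any two of $\theta_2^4,\theta_3^4,\theta_4^4$ generate the same field as all three; and $D\theta_\ell=\tfrac{\theta_\ell}{12}\big(E_2+P_\ell(\theta_2^4,\theta_3^4,\theta_4^4)\big)$ for the polynomial $P_\ell$ read off from the displayed formulas. These facts give that $\mathbb{Q}(\theta_j,\theta_k,D\theta_\ell)$ and $\mathbb{Q}(\theta_2^4,\theta_3^4,\theta_4^4,E_2)$ again have the same algebraic closure (the case $\ell\in\{j,k\}$ being immediate, and for $\ell\notin\{j,k\}$ one uses $\theta_\ell^4=\theta_j^4\pm\theta_k^4$). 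Hence $\operatorname{trdeg}_{\mathbb{Q}}\mathbb{Q}\big(\theta_j(\tau),\theta_k(\tau),D\theta_\ell(\tau)\big)=3$, and being generated by three elements these three numbers are algebraically independent over $\mathbb{Q}$.

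The one deep ingredient is Nesterenko's theorem in the first step; everything else is classical theta-constant calculus together with a transcendence-degree count. I expect the only delicate points to be making the chain of modular identities rigorous --- in particular the algebraicity of $\theta_3^4$ over $\mathbb{Q}(E_4,E_6)$ through $\lambda$ and $\Delta$ --- and the small case analysis on the indices $j,k,\ell$, since it is exactly there that a careless argument could lose or gain transcendence degree.
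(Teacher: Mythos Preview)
The paper does not provide its own proof of this statement: Theorem~\ref{Thm3.2} is quoted from Bertrand's paper \cite{Bertrand} as an external tool, with no argument supplied here. There is therefore no in-paper proof to compare your proposal against.

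That said, your sketch is essentially correct and is the standard derivation of this result. The deep input is indeed Nesterenko's theorem on the transcendence degree of $\mathbb{Q}(Q,E_2,E_4,E_6)$ for algebraic $Q=e^{2\pi i\tau}$, and the remainder is classical theta-constant calculus: the expressions for $E_4,E_6$ as polynomials in $\theta_2^4,\theta_3^4,\theta_4^4$, Jacobi's relation $\theta_3^4=\theta_2^4+\theta_4^4$, the formulas linking $D\theta_i/\theta_i$ to $E_2$ modulo the $\theta_j^4$, and the algebraicity of $\lambda$ and of $\theta_3^4$ over $\mathbb{Q}(E_4,E_6)$ via $j$ and $\Delta$. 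Your final case analysis on $\ell$ is the right one; when $\ell\notin\{j,k\}$ you must indeed pass to the algebraic closure to recover $\theta_\ell$ from $\theta_\ell^4$ before isolating $E_2$, but this affects only the algebraic closure and not the transcendence degree, so the count goes through. This is, in outline, exactly Bertrand's own argument, and the one presented in Nesterenko's monograph \cite{nest}.
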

\begin{theorem} \cite[Theorem 1, Corollary 4]{nest2} \\  
For any odd integer $n\geq 3$ there exists an integer polynomial $P_n(X,Y)$ with deg${_X}~ P_n(X,Y)=\psi(n)$ such that 
$$
P_n\left(n^2\frac{\theta^4_3(n\tau)}{\theta^4_3(\tau)}, 16\frac{\theta^4_2(\tau)}{\theta^4_3(\tau)}\right)=0
$$
holds for all $\tau\in\mathbb{H}$.
Moreover, the polynomial $P_n(X, Y)$ is of the form
\begin{equation*}\label{eq1.1}
\tag{3.1}
P_n(X, Y):= X^{\psi(n)}+R_1(Y)X^{\psi(n)-1}+\cdots+R_{\psi(n)-1}(Y)X+R_{\psi(n)}(Y) \,=\, \sum_{k=0}^{\psi(m)} R_k(Y)X^{\psi(m)-k},
\end{equation*}
where $R_j(Y)\in\mathbb{Z}[Y]$ for $j=1,\dots,\psi(n)$, and
\[\deg R_k(Y) \,\leq \, k\cdot \frac{n-1}{n} \qquad \big( 1\leq k\leq \psi(n)\big) \,.\]
Let 
\[d_n \,:=\, \max_{1\leq k \leq \psi(n)} \deg_Y R_k(Y) \,.\] 
Then, $P_n(X,Y)$ can be written as
\begin{equation}
\tag{3.2}
P_n(X,Y) \,=\, \sum_{j=0}^{d_n} S_{n,j}(X)Y^j \,,
\label{EQN1}
\end{equation}
where $S_{n,j}(X) \in {\Z}[X]$ with $0\leq j\leq d_n$ and $d_n>0$, such that
\begin{equation}
\tag{3.3}
S_{n,j}(0) \,=\, 0 \quad (1\leq j\leq d_n)\,,\qquad S_{n,d_n}(X) \,\not\equiv \, 0 \,,
\label{EQN2}
\end{equation}
\begin{equation}
\tag{3.4}
S_{n,0}(0) \,=\, P_n(0,0) \,\in \ {\Z}\setminus \{ 0\} \,.
\label{EQN3}
\end{equation}
\label{Thm3.3}
\end{theorem}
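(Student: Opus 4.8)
\noindent The plan is to realise the two arguments of $P_n$ as modular functions for a single congruence subgroup and then to invoke the function-field dictionary for the rational modular curve $X(2)$; the details are carried out in \cite{nest2}. Set
\[
u(\tau)\,:=\,16\,\frac{\theta_2^4(\tau)}{\theta_3^4(\tau)}\,=\,16\lambda(\tau)\,,\qquad v(\tau)\,:=\,n^2\,\frac{\theta_3^4(n\tau)}{\theta_3^4(\tau)}\,.
\]
Recall that $\theta_2^4,\theta_3^4,\theta_4^4$ span the two-dimensional space of weight-$2$ modular forms for $\Gamma(2)$, that $\theta_3^4=\theta_2^4+\theta_4^4$, and that $\lambda$ is a Hauptmodul for $X(2)=\Gamma(2)\backslash\mathbb{H}^{*}$, a curve of genus $0$ with three cusps $0,1,\infty$. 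Using $\theta_3(\tau+2)=\theta_3(\tau)$ and $\theta_3(-1/\tau)=\sqrt{-i\tau}\,\theta_3(\tau)$ one checks that $\theta_3^4(n\tau)$ is a weight-$2$ modular form for $\alpha^{-1}\Gamma(2)\alpha$, where $\alpha=\bigl(\begin{smallmatrix}n&0\\0&1\end{smallmatrix}\bigr)$; intersecting with $\Gamma(2)$ one gets a congruence subgroup $G=\Gamma(2)\cap\alpha^{-1}\Gamma(2)\alpha$ of finite index in $\mathrm{SL}_2(\mathbb{Z})$ with $G\subseteq\Gamma(2)$, for which both $u$ and $v$ are modular functions.

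Next, since the covering $X_G\to X(2)$ has finite degree and $\mathbb{C}(X(2))=\mathbb{C}(\lambda)=\mathbb{C}(u)$, the field $\mathbb{C}(u,v)$ is a finite extension of $\mathbb{C}(u)$; hence $v$ is algebraic over $\mathbb{C}(u)$ and, after clearing denominators and normalising the leading coefficient, satisfies $P_n(v,u)=0$ with
\[
P_n(X,Y)\,=\,X^{d}+R_1(Y)X^{d-1}+\cdots+R_{d-1}(Y)X+R_d(Y),\qquad R_j\in\mathbb{C}[Y].
\]
Here $d$ equals the number of distinct conjugates of $v$ over $\mathbb{C}(\lambda)$; identifying these with the cyclic degree-$n$ isogenous data, as in the classical theory of the modular equation, one checks that for odd $n$ they are pairwise distinct and number exactly $\psi(n)=[\mathrm{SL}_2(\mathbb{Z}):\Gamma_0(n)]$, so $d=\psi(n)=\deg_X P_n$ and the monic form \eqref{eq1.1} holds.

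For integrality one uses that $u=256q+\cdots$ and $v=n^2+\cdots$, with $q=e^{i\pi\tau}$, have power-series expansions with rational integer coefficients: the elementary symmetric functions of the conjugates of $v$ are invariant, hence lie in $\mathbb{Q}(u)$, and are regular away from the cusp where $u=\infty$, hence are polynomials in $u$; a Gauss-lemma argument on the $q$-expansions (the conjugates of $v$ being integral over $\mathbb{Z}[u]$) then forces $R_j\in\mathbb{Z}[Y]$ with $P_n$ monic in $X$, while comparing the $q$-orders of $u$ and $v$ at the three cusps yields $\deg R_k(Y)\le k(n-1)/n$.

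Finally, grouping $P_n$ by powers of $Y$ gives $P_n(X,Y)=\sum_{j\ge 0}S_{n,j}(X)Y^j$ with $d_n=\deg_Y P_n$, which is \eqref{EQN1}, and $S_{n,d_n}\not\equiv 0$ by definition; letting $\tau\to i\infty$, where $Y=u\to 0$ and $X=v\to n^2$, shows $S_{n,0}(n^2)=0$. Since $\theta_3$ has no zeros in $\mathbb{H}$, neither $v$ nor any of its conjugates vanishes on $\mathbb{H}$, and each such conjugate tends to a finite nonzero limit at the cusps $0$ and $\infty$; hence $P_n(0,Y)$, which up to sign equals the product of the conjugates of $v$, is a modular function on $X(2)$ holomorphic and nonvanishing on $\mathbb{H}$ and at those two cusps, so by the degree-zero principle on the genus-zero curve $X(2)$ it has neither a zero nor a pole at the remaining cusp and is therefore a nonzero constant, namely $P_n(0,0)=S_{n,0}(0)\in\mathbb{Z}\setminus\{0\}$; this forces $S_{n,j}(0)=0$ for $j\ge1$, giving \eqref{EQN2} and \eqref{EQN3}. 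The soft part — the mere existence of an algebraic relation of degree $\psi(n)$ — drops out of the function-field formalism; the main obstacle, and the real content of \cite{nest2}, is the explicit cusp analysis: locating the cusps of $X_G$ above those of $X(2)$ with their ramification and the precise $q$-orders of $u$ and $v$, which is what one needs for the exact count $\psi(n)$, the sharp bound on $\deg R_k$, and the integral normalisation by the constants $16$ and $n^2$.
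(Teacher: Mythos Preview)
The paper does not actually prove this theorem: it quotes the existence of $P_n$ and the form (3.1) directly from Nesterenko \cite{nest2}, and for the supplementary properties (3.2)--(3.4) it simply points to the proof of Lemma~7 in \cite{luca}. Your sketch therefore goes beyond what the paper itself supplies, and it is essentially the modular-function argument that underlies Nesterenko's result: realise $u=16\lambda$ and $v=n^{2}\theta_3^4(n\tau)/\theta_3^4(\tau)$ as modular functions for the common congruence subgroup $G=\Gamma(2)\cap\alpha^{-1}\Gamma(2)\alpha$, use that $\lambda$ is a Hauptmodul on $X(2)$ to obtain an algebraic relation of degree $[\Gamma(2):G]=\psi(n)$, and then extract integrality and the degree bound on the $R_k$ from the $q$-expansions at the cusps.

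Your treatment of (3.3)--(3.4) via the observation that $P_n(0,Y)=R_{\psi(n)}(Y)$ equals (up to sign) the product of the Galois conjugates of $v$, and that this product must be a nonzero constant on the compact curve $X(2)$, is also the right mechanism. The one point that genuinely needs the detailed cusp bookkeeping you defer to \cite{nest2} is the claim that \emph{every} conjugate $v_i$ --- not just $v$ itself --- is finite and nonzero at the cusps of $X(2)$ lying under $u=0$ and $u=16$; this amounts to tracking how the coset representatives of $G$ in $\Gamma(2)$ move the cusps and computing the local expansions of $v$ there, which is exactly what you identify as the substantive part of the argument.
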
 
\smallskip
The properties (\ref{EQN1}) to (\ref{EQN3}) of $P_n(X,Y)$ follow from the proof of \cite[Lemma 7]{luca}; cf. formula (9).

\noindent{\bf Remark 1.}
From \eqref{eq1.1}, we can observe that for any complex number $\alpha$, the polynomial $P_n(X, \alpha)$  is non-zero.
\bigskip

The following lemmas are  crucial for the proof of our results.
\begin{lemma} \cite[Lemma 2.5]{elsner} \\
Let $n=2^a s$ be an integer with $a\geq 1$ and an odd integer $s\geq 3$.  Then there exists a polynomial $Q_n(X, Y)$ with integral coefficients 
such that $$
Q_n\left(\frac{\theta^4_3(n\tau)}{\theta^4_3(\tau)}, \frac{\theta^4_2(\tau)}{\theta^4_3(\tau)}\right)=0
$$
holds for all $\tau\in\mathbb{H}$.  Moreover, the polynomial $Q_n(X, Y)$ is of the form
\begin{equation*}\label{eq2.1}
\tag{3.5}
Q_n(X, Y)=c_n^{2^a}Y^{2^a \psi(s)}+\sum_{j=0}^{2^a \psi(s)-1}R_{n,j}(X)Y^j, 
\end{equation*}
where $c_n$ is a non-zero integer. Moreover,
\begin{equation}\label{eq2.2}
\tag{3.6}
\deg R_{n,j}(X) \,\leq \, 2^a \psi(s) -j \quad \big( 0\leq j<2^a\psi(s) \big) \,,
\end{equation}
and
\begin{equation*}\label{eq2.3}
\tag{3.7}
Q_n(0, Y)=c_n^{2^a}Y^{2^a \psi(s)}\,,
\end{equation*} 
\begin{equation*}\label{eq2.4}
\tag{3.8}
R_{n,0}(X)  \,=\, Q_n(X,0) \,=\, 2^{4(2^a-1)\psi(s)}X^{(2^a-1)\psi(s)}P_s(X,0) \,.
\end{equation*}
\label{Lem3.3.1}
\end{lemma}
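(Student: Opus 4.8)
\medskip
\noindent\emph{Sketch of a proof of Lemma \textup{\ref{Lem3.3.1}}.}
Write $n=2^a s$ and set
\[
Y:=\frac{\theta_2^4(\tau)}{\theta_3^4(\tau)}=\lambda(\tau),\qquad X:=\frac{\theta_3^4(n\tau)}{\theta_3^4(\tau)}.
\]
The plan is to factor the passage $\tau\mapsto n\tau$ through the intermediate point $2^a\tau$. For the ascent $\tau\to 2^a\tau$ I use the classical duplication identities $\theta_3^2(2\sigma)=\tfrac12\bigl(\theta_3^2(\sigma)+\theta_4^2(\sigma)\bigr)$ and $\theta_4^2(2\sigma)=\theta_3(\sigma)\theta_4(\sigma)$ together with $\theta_3^4=\theta_2^4+\theta_4^4$; dividing by $\theta_3^4(\sigma)$ and writing $t=\lambda(\sigma)$, so that $\theta_4^2(\sigma)/\theta_3^2(\sigma)=\sqrt{1-t}$, these become
\[
\frac{\theta_3^4(2\sigma)}{\theta_3^4(\sigma)}=\frac14\bigl(1+\sqrt{1-t}\,\bigr)^2,\qquad \lambda(2\sigma)=\left(\frac{1-\sqrt{1-t}}{\,1+\sqrt{1-t}\,}\right)^{\!2}.
\]
Iterating $a$ times from $\sigma=\tau$ shows that both $\theta_3^4(2^a\tau)/\theta_3^4(\tau)$ and $\lambda(2^a\tau)$ lie in the tower $L:=\Q(Y)\bigl(\sqrt{1-\lambda(\tau)},\sqrt{1-\lambda(2\tau)},\dots,\sqrt{1-\lambda(2^{a-1}\tau)}\bigr)$, an extension of $\Q(Y)$ of degree at most $2^a$.

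Next I apply Theorem \ref{Thm3.3} with $\tau$ replaced by $2^a\tau$, obtaining
\[
P_s\!\left(s^2\,\frac{\theta_3^4(n\tau)}{\theta_3^4(2^a\tau)},\ 16\,\lambda(2^a\tau)\right)=0.
\]
Since $P_s$ is monic of degree $\psi(s)$ in its first slot, $\theta_3^4(n\tau)/\theta_3^4(2^a\tau)$ is algebraic over $\Q\bigl(\lambda(2^a\tau)\bigr)\subseteq L$ of degree at most $\psi(s)$; multiplying it by $\theta_3^4(2^a\tau)/\theta_3^4(\tau)\in L$ gives $X$, so $X$ is algebraic over $\Q(Y)$ of degree at most $2^a\psi(s)$. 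Taking the minimal polynomial of $X$ over $\Q(Y)$, clearing denominators and removing the integer content, one obtains $Q_n(X,Y)\in\Z[X,Y]\setminus\{0\}$ with $Q_n\bigl(\theta_3^4(n\tau)/\theta_3^4(\tau),\,\theta_2^4(\tau)/\theta_3^4(\tau)\bigr)=0$ for all $\tau\in\H$ and $\deg_X Q_n\le 2^a\psi(s)$. This settles the existence part.

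The normal form (3.5)--(3.8) is then read off from the behaviour at cusps, i.e.\ from leading terms of $q$-expansions ($q=e^{i\pi\tau}$). As $q\to0$ one has $\theta_3(m\tau)\to1$ and $\theta_2(\tau)\to0$, so the cusp $q=0$ gives $(X,Y)=(1,0)$; tracking the orders of $X$ and $Y$ there and at the remaining cusps of the modular curve carrying $X$ and $Y$ (a congruence subgroup of $\Gamma(2)$) forces $\deg_Y Q_n=2^a\psi(s)$ with leading $Y$-coefficient free of $X$ --- the nonzero integer $c_n^{2^a}$ --- together with the bound $\deg_X R_{n,j}\le 2^a\psi(s)-j$ and the identity $Q_n(0,Y)=c_n^{2^a}Y^{2^a\psi(s)}$. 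For $Q_n(X,0)$ one observes that $Y=0$, i.e.\ $\lambda(\tau)=0$, forces $\lambda(2^k\tau)=0$ and $\sqrt{1-\lambda(2^k\tau)}=1$ for every $k$, whence $\theta_3^4(2^a\tau)/\theta_3^4(\tau)\to1$ and the ascent contributes nothing, while substituting $\lambda(2^a\tau)=0$ into Theorem \ref{Thm3.3} leaves exactly the factor $P_s(s^2X,0)$; the remaining branches of $X$ over $Y=0$ all collapse to $X=0$ with total multiplicity $(2^a-1)\psi(s)$, and restoring the powers of $2^4$ carried by the Landen denominators produces the constant $2^{4(2^a-1)\psi(s)}$, giving (3.8) (and, on comparing $X$-degrees, $\deg_X Q_n=2^a\psi(s)$).

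I expect the construction of $Q_n$ to be routine but bulky. The genuinely delicate point is the last step: upgrading the soft estimate ``$X$ algebraic over $\Q(Y)$ of degree $\le 2^a\psi(s)$'' to the sharp normal form (3.5)--(3.8). This requires the cusp-by-cusp order analysis indicated above --- equivalently, a disciplined bookkeeping of leading $q$-coefficients through the Landen tower and through the relation of Theorem \ref{Thm3.3}.
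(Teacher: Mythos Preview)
Your approach diverges from the paper's in both scope and method. The paper does not prove (3.5)--(3.7) at all: those are quoted verbatim from \cite[Lemma~2.5]{elsner}, where $Q_n$ is built by an explicit recursion (set $Q_s(X,Y):=P_s(s^2X,Y)$ and, from $Q_{2^as}$, form $B_{2^{a+1}s}$, then $\tilde Q_{2^{a+1}s}(X^4,Y^4)=B_{2^{a+1}s}(X,Y)B_{2^{a+1}s}(X,iY)$, then $Q_{2^{a+1}s}(X,Y)=\tilde Q_{2^{a+1}s}(X,1-Y)$). The only thing the paper proves is (3.8), and it does so by induction on $a$ inside this very recursion: one computes $B_{2^{a+1}s}(X,1)$ and $B_{2^{a+1}s}(X,i)$ directly from the induction hypothesis, multiplies, and reads off $Q_{2^{a+1}s}(X,0)=\tilde Q_{2^{a+1}s}(X,1)$ together with the exact power of $2$ and the exact exponent of $X$.

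The gap in your proposal is that your $Q_n$ is not a priori the same polynomial. You produce \emph{some} annihilating $Q_n\in\Z[X,Y]$ by taking a minimal polynomial and clearing content; but (3.5) asserts that the leading $Y$-coefficient is a perfect $2^a$-th power $c_n^{2^a}$, and (3.8) asserts an exact constant $2^{4(2^a-1)\psi(s)}$ and an exact $X$-exponent $(2^a-1)\psi(s)$. These are normalization-sensitive statements about the specific $Q_n$ constructed in \cite{elsner}, and they do not follow from ``minimal polynomial plus content $1$''. Your cusp heuristic for (3.8) is on the right track qualitatively --- taking the ``wrong'' sign of $\sqrt{1-\lambda}$ at any Landen step does send the branch to $X=0$ --- but it does not by itself yield the multiplicity $(2^a-1)\psi(s)$ or the constant $2^{4(2^a-1)\psi(s)}$, nor does it establish that your $Q_n$ agrees (even up to sign) with the one the lemma is about. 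To close this, you would either have to reproduce the recursive construction and run the paper's induction, or carry out a full valuation analysis at every cusp that pins down all leading coefficients; the sketch does neither.
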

{\em Proof.\/} \,Apart from formula (\ref{eq2.4}) the statements are given in \cite[Lemma\,2.5]{elsner}. It remains to prove (\ref{eq2.4}). 
We proceed by induction with respect to $a$ and follow the lines of the proof of \cite[Lemma\,2.5]{elsner}. 
As in the first part of the proof of Lemma\,2.5 (corresponding to $a=\alpha =1$) we construct the polynomials $B_{2s}(X,Y)$, $\tilde{Q}_{2s}(X,Y)$,
and $Q_{2s}(X,Y)$, where
\[\tilde{Q}_{2s}(X^4,Y^4) \,=\, B_{2s}(X,Y)B_{2s}(X,iY) \]
and
\[Q_{2s}(X,Y) \,=\, \tilde{Q}_{2s}(X,1-Y) \,.\]
From the proof of Lemma\,2.5 we obtain
$$B_{2s}(X,1) = 2^{2\psi(s)}P_s(X^4,0) \,,\quad
B_{2s}(X,i) = 2^{2\psi(s)}X^{4\psi(s)} \,.
$$
We sketch the proof of these two identities. We take the polynomials $B_{2s}(X,Y)$ from formula (19) in \cite{elsner} by setting $n=2s$:
\[B_{2s}(X,Y) \,=\, \sum_{\nu,\mu} 2^{2\nu}a_{\nu,\mu}X^{4\nu}{\big(1-Y^2\big)}^{2\mu}{\big(1+Y^2\big)}^{2(\psi(s)-\nu-\mu)} \,;\]
where the variables $a_{\nu,\mu}$ are the coefficients of the polynomial
\[P_s(X,Y) \,=\, \sum_{\nu,\mu} a_{\nu,\mu}X^{\nu}Y^{\mu} \]
from Theorem\,\ref{Thm3.3}. Thus, on the one hand, we obtain
\[
B_{2s}(X,1) \,=\, \sum_{\nu} 2^{2\nu}a_{\nu,0} X^{4\nu}2^{2(\psi(s)-\nu)}  \,=\, 2^{2\psi(s)}\sum_{\nu}a_{\nu,0} X^{4\nu}  
\,=\, 2^{2\psi(s)}P_s\big( X^4,0 \big)\,, \]
on the other hand,
\begin{eqnarray*}
B_{2s}(X,i) &=& \sum_{\scriptsize{\begin{array}{c}
\nu,\mu \\ \nu + \mu = \psi(s)
\end{array}}} 2^{2\nu}a_{\nu,\mu}X^{4\nu}2^{2\mu} \\
&=& 2^{2\psi(s)}\sum_{\scriptsize{\begin{array}{c}
\nu,\mu \\ \nu + \mu = \psi(s)
\end{array}}} a_{\nu,\mu}X^{4\nu} \\
&=& 2^{2\psi(s)}X^{4\psi(s)} \,.
\end{eqnarray*}
The last identity follows from the fact that $a_{\nu,\mu}=0$ for $\nu+\mu=\psi(s)$ with $\mu \geq 1$, which is a consequence of (\ref{eq1.1}) and
$\deg R_k(Y)<k$ for $k\geq 1$.  For $\mu =0$ and $\nu=\psi(s)$ we have $a_{0,\psi(s)}=R_0(Y)=1$, cf. \cite[p.\,154]{nest2}. \vspace*{5pt} \\
This gives
\[\tilde{Q}_{2s}(X^4,1) \,=\, B_{2s}(X,1)B_{2s}(X,i) \,=\, 2^{4\psi(s)}X^{4\psi(s)}P_s(X^4,0) \,,\]
and, consequently,
\[Q_{2s}(X,0) \,=\, \tilde{Q}_{2s}(X,1) \,=\, 2^{4\psi(s)} X^{\psi(s)}P_s(X,0) \,.\]
This shows that (\ref{eq2.4}) holds for $a=1$. Next, let (\ref{eq2.4}) be already proven for some fixed $a\geq 1$. For the induction step we
construct the polynomials $B_{2^{a+1}s}(X,Y)$, $\tilde{Q}_{2^{a+1}s}(X,Y)$, and $Q_{2^{a+1}s}(X,Y)$, where
$$
\tilde{Q}_{2^{a+1}s}(X^4,Y^4) \,=\, B_{2^{a+1}s}(X,Y)B_{2^{a+1}s}(X,iY), \quad
Q_{2^{a+1}s}(X,Y) \,=\, \tilde{Q}_{2^{a+1}s}(X,1-Y) \,.
$$
Since $\deg_X Q_{2^as}(X,Y) = 2^a \psi(s)$, we obtain by applying the induction hypothesis,
\begin{eqnarray*}  
B_{2^{a+1}s}(X,1) &=& 2^{2\cdot 2^a \psi(s)} Q_{2^as}(X^4) \,=\, 2^{2^{a+1}\psi(s)} \big( \,2^{4(2^a-1)\psi(s)}X^{4(2^a-1)\psi(s)}P_s(X^4,0)\,\big) \,,\\
B_{2^{a+1}s}(X,i) &=& 2^{2\cdot 2^a \psi(s)} X^{4\cdot 2^a \psi(s)} \,=\, 2^{2^{a+1}\psi(s)}X^{4\cdot 2^a \psi(s)} \,. 
\end{eqnarray*}
Therefore, it turns out that
\begin{eqnarray*}
\tilde{Q}_{2^{a+1}s}(X^4,1) &=& B_{2^{a+1}s}(X,1)B_{2^{a+1}s}(X,i) \\
&=& 2^{2\cdot 2^{a+1}\psi(s) + 4(2^a-1)\psi(s)}X^{4\cdot 2^a\psi(s) + 4(2^a-1)\psi(s)}P_s(X^4,0) \\
&=& 2^{4(2^{a+1}-1)\psi(s)}X^{4(2^{a+1}-1)\psi(s)}P_s(X^4,0) \,.
\end{eqnarray*}
We complete the proof of the lemma by observing that
\[Q_{2^{a+1}s}(X,0) \,=\, \tilde{Q}_{2^{a+1}s}(X,1) \,=\, 2^{4(2^{a+1}-1)\psi(s)}X^{(2^{a+1}-1)\psi(s)}P_s(X,0) \,,\]
which is the identity in (\ref{eq2.4}) with $a$ replaced by $a+1$. \hfill $\Box$
\begin{lemma}
Let $s\geq 3$ be an odd integer and let $P_s(X,Y)$ be the integer polynomial from Theorem\,\ref{Thm3.3} with $n$ replaced by $s$. Then we have
\[P_s(X,0) \,=\, \prod_{\scriptsize {\begin{array}{c} u|s \\ u\geq 1 \end{array}}} {\big( \,X - u^2\,\big)}^{w(u,s/u)}\,,\]
where $w(a,b)$ is defined by the number of integers $k$ with
\[0 \,\leq \, k \,<\, b \qquad \mbox{and} \qquad \mbox{gcd\,}(a,b,k) \,=\, 1 \,.\]
\label{Lem3.2}   
\end{lemma}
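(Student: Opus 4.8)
The plan is to recover $P_s(s^2X,0)$ by determining the roots of the one–variable polynomial $Z\mapsto P_s(Z,0)$ and their multiplicities. By Theorem\,\ref{Thm3.3} the polynomial $P_s(X,Y)$ is monic in $X$ of degree $\psi(s)$ with integer coefficients, so $P_s(Z,0)\in\Z[Z]$ is monic of degree $\psi(s)$ and the leading coefficient of $P_s(s^2X,0)$ is $s^{2\psi(s)}$. One has $w(s/u,u)=\frac{u\,\phi(\gcd(u,s/u))}{\gcd(u,s/u)}$, and $\sum_{u\mid s}w(s/u,u)=\psi(s)$: this sum is multiplicative in $s$ and is checked on prime powers, being just the identity $[\mathrm{SL}_2(\Z):\Gamma_0(s)]=\psi(s)$ written through the widths of the cusps of $\Gamma_0(s)$. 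Hence it suffices to prove that for every divisor $u\mid s$ the number $u^{-2}$ is a root of $P_s(s^2X,0)$ of multiplicity at least $w(s/u,u)$.

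To locate the roots I would use the relation
\[
P_s\!\left(s^2\,\frac{\theta_3^4(s\tau)}{\theta_3^4(\tau)},\ 16\,\frac{\theta_2^4(\tau)}{\theta_3^4(\tau)}\right)=0\qquad(\tau\in\H)
\]
of Theorem\,\ref{Thm3.3} and let $\tau$ degenerate towards the cusps at which the second argument $16\lambda(\tau)$ vanishes. Put $F(\tau):=\theta_3^4(s\tau)/\theta_3^4(\tau)$. Since $s$ is odd, $F$ and $\lambda$ are modular functions for $\Gamma^{*}:=\Gamma(2)\cap\Gamma_0(s)$ (using that $\theta_3^4$ is a modular form of weight $2$ and trivial character on $\Gamma(2)$), $[\overline{\Gamma(2)}:\overline{\Gamma^{*}}]=\psi(s)$, and $\lambda$ is a Hauptmodul for $\Gamma(2)$ with a simple zero at the cusp $\infty$. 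Via $X=s^2F$, $Y=16\lambda$ the curve $P_s(X,Y)=0$ is then a plane model of the modular curve of $\Gamma^{*}$ over the $\lambda$–line --- this uses that $P_s$ is irreducible, i.e.\ that $F$ is invariant under no group strictly between $\Gamma^{*}$ and $\Gamma(2)$ --- so the roots of $P_s(Z,0)$ are exactly the values of $s^2F$ at the cusps of $\Gamma^{*}$ lying over $\lambda=0$, each with multiplicity equal to the ramification index there, which (as $\Gamma(2)$ is torsion free) is the width of the underlying cusp of $\Gamma_0(s)$.

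The key computation is the evaluation of $F$ at these cusps, which are the rationals $a/c$ with $\gcd(a,c)=1$ and $c$ even. For such a cusp choose $\gamma=\left(\begin{smallmatrix}a&b\\ c&d\end{smallmatrix}\right)\in\mathrm{SL}_2(\Z)$, set $\tau=\gamma\sigma$ with $\sigma\to i\infty$, and apply the classical transformation formula for $\theta_3$: since $c$ is even, $\gamma$ reduces to an upper unitriangular matrix mod $2$, so $\theta_3^4$ is carried with trivial multiplier into $\theta_3^4$ or $\theta_4^4$ (both $\to 1$ at $i\infty$) and $\theta_3^4(\tau)\sim(c\sigma+d)^2$. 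Writing $s\gamma=\delta\left(\begin{smallmatrix}\mu&\nu\\ 0&\mu'\end{smallmatrix}\right)$ with $\delta\in\mathrm{SL}_2(\Z)$ of bottom row $(c_\delta,d_\delta)$, $\mu=\gcd(s,c)$, $\mu'=s/\gcd(s,c)$, one has $c_\delta=c/\gcd(s,c)$ (again even), $\rho:=\delta^{-1}(s\tau)\sim(\mu/\mu')\sigma$, $\theta_3^4(s\tau)=\theta_3^4(\delta\rho)\sim(c_\delta\rho+d_\delta)^2$, and therefore
\[
F(\tau)\ \longrightarrow\ \left(\frac{\gcd(s,c)}{s}\right)^{2}.
\]
Consequently, for $v:=\gcd(s,c)\mid s$ the function $s^2F$ takes the value $v^{2}$ at every cusp of $\Gamma^{*}$ over $\lambda=0$ whose underlying $\Gamma_0(s)$–cusp has denominator $v$; there are $\phi(\gcd(v,s/v))$ such cusps, each of ramification index $\frac{s/v}{\gcd(v,s/v)}$, so $v^{2}$ is a root of $P_s(Z,0)$ of multiplicity $\phi(\gcd(v,s/v))\cdot\frac{s/v}{\gcd(v,s/v)}=w(v,s/v)$. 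Summing over $v\mid s$ accounts for all $\psi(s)$ roots, whence $P_s(Z,0)=\prod_{v\mid s}(Z-v^{2})^{w(v,s/v)}$; substituting $Z=s^2X$ and re-indexing by $u=s/v$ gives $s^{2\psi(s)}\prod_{u\mid s}(X-u^{-2})^{w(s/u,u)}$, as asserted.

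The main obstacle is the modular–geometric bookkeeping in the middle steps: confirming that the cusps of $\Gamma^{*}$ above $\lambda=0$ are precisely the $a/c$ with $c$ even, that for each $v\mid s$ these fall into $\phi(\gcd(v,s/v))$ classes of ramification $\frac{s/v}{\gcd(v,s/v)}$, and that $F$ is holomorphic and non-vanishing at all of them; and --- most delicately --- verifying that $P_s$ is the minimal polynomial of $s^2F$ over $\C(\lambda)$, equivalently $[\C(\lambda,F):\C(\lambda)]=\psi(s)$, so that $P_s(Z,0)$ really is the product over the fibre. This irreducibility of $P_s$ should either be extracted from \cite{nest2} or obtained by exhibiting $\psi(s)$ distinct $\C(\lambda)$–conjugates of $F$, namely the functions $F\circ\gamma$ with $\gamma$ running through coset representatives of $\Gamma^{*}$ in $\Gamma(2)$. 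With these facts secured, the arithmetic identity $\sum_{u\mid s}w(s/u,u)=\psi(s)$ upgrades the ``$\geq$'' bounds on the multiplicities to equalities and simultaneously pins down the constant $s^{2\psi(s)}$.
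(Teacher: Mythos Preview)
Your argument is essentially correct in outline, but it takes a very different route from the paper. The paper's proof is one line: it quotes from \cite{luca} (Lemma~4 there) the factorization
\[
P_s(X,0)\,=\,\prod_{u\mid s}(X-u^{2})^{w(u,s/u)},
\]
substitutes $X\mapsto s^{2}X$, pulls out the factor $s^{2}$ from each linear term, and re-indexes $u\mapsto s/u$; the identity $\sum_{u\mid s}w(u,s/u)=\psi(s)$ is automatic since $P_s$ is monic of degree $\psi(s)$ in $X$. What you have written is, in effect, a complete re-derivation of that cited lemma via the modular interpretation of $P_s(X,Y)=0$ as a model of $X(\Gamma(2)\cap\Gamma_0(s))$ over the $\lambda$-line: you locate the fibre over $\lambda=0$ cusp by cusp, compute the limiting value $(\gcd(s,c)/s)^{2}$ of $F$ via the transformation law of $\theta_3$, and read off multiplicities as cusp widths. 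This is a genuinely more informative argument---it explains \emph{why} the roots are the $v^{2}$ with $v\mid s$ and why their multiplicities are the cusp widths $w(v,s/v)$---whereas the paper simply outsources the fact. The price is the bookkeeping you flag yourself: the enumeration of cusps of $\Gamma(2)\cap\Gamma_0(s)$ over $\infty$, the ramification/width formula, and above all the irreducibility of $P_s$ over $\mathbb{C}(\lambda)$ so that $P_s(Z,0)$ really equals the full fibre product rather than a factor of it. For the last point you can either invoke \cite[Lemma~1]{luca} (also used later in the paper) or the conjugate-counting argument you sketch. With those details supplied, your proof stands; for the purposes of this paper, however, the one-line citation suffices.
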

{\em Proof.\/} \, The statement follows from the identity given in \cite[Lemma 4]{luca}. \hfill $\Box$
\begin{lemma}
Let $\tau \in \mathbb{H}$ be as in Theorem\,\ref{Thm3.3.3}. Then the numbers $\theta_3/\theta_4$ and $\theta_2/\theta_3$ are transcendental.
\label{Lem3.3}
\end{lemma}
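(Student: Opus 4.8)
The plan is to split along the two alternatives in Theorem \ref{Thm3.3.3} and, in each, to reduce the transcendence of the two ratios to a transcendence statement already quoted above. Throughout I will use that $\theta_2(\tau),\theta_3(\tau),\theta_4(\tau)$ are all nonzero for $\tau\in\H$, together with the Jacobi identity $\theta_3^4=\theta_2^4+\theta_4^4$, so that $\lambda=\theta_2^4/\theta_3^4$ and $1-\lambda=\theta_4^4/\theta_3^4$ both lie in $\C\setminus\{0\}$.

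Assume first that $q=e^{i\pi\tau}$ is algebraic. I would apply Theorem \ref{Thm3.2} twice, once with $(j,k)=(2,3)$ and once with $(j,k)=(3,4)$: it gives that $\theta_2(\tau)$ and $\theta_3(\tau)$ are algebraically independent over $\Q$, and likewise $\theta_3(\tau)$ and $\theta_4(\tau)$. Since algebraic independence over $\Q$ is equivalent to algebraic independence over $\overline{\Q}$, an algebraic value $\alpha$ of $\theta_2/\theta_3$ would yield the relation $\theta_2(\tau)-\alpha\,\theta_3(\tau)=0$ over $\overline{\Q}$, a contradiction (note $\alpha\neq 0$ because $\theta_2(\tau)\neq 0$). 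The identical argument applied to $\theta_3(\tau)-\beta\,\theta_4(\tau)=0$ shows that $\theta_3/\theta_4$ is transcendental.

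Assume next that $\tau$ is algebraic of degree $\geq 3$. By Theorem \ref{Thm3.1}, $j(\tau)$ is transcendental. Since $j(\tau)=256(\lambda^2-\lambda+1)^3/\big(\lambda^2(\lambda-1)^2\big)$ is a rational function of $\lambda$ with rational coefficients, $\lambda$ must itself be transcendental. If $\theta_2/\theta_3$ were algebraic, then so would be $\lambda=(\theta_2/\theta_3)^4$, a contradiction; hence $\theta_2/\theta_3$ is transcendental. Similarly, if $\theta_3/\theta_4$ were algebraic, then $(\theta_4/\theta_3)^4=1-\lambda$ would be algebraic, forcing $\lambda$ and hence $j(\tau)$ algebraic, again a contradiction; hence $\theta_3/\theta_4$ is transcendental.

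I do not expect a genuine obstacle here: the two quoted theorems (Schneider's theorem on $j(\tau)$ and Bertrand's theorem on theta-constants) do all the work. The only points requiring a little care are verifying that $\lambda\notin\{0,1\}$ so that the modular expression for $j(\tau)$ is legitimate — which is exactly the non-vanishing of the theta-constants on $\H$ combined with the Jacobi identity — and recalling the standard fact that algebraic independence over $\Q$ upgrades to algebraic independence over $\overline{\Q}$, so that a quotient of two such (necessarily nonzero) quantities cannot be an algebraic number.
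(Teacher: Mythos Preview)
Your proof is correct and follows essentially the same approach as the paper: split into the two cases of Theorem~\ref{Thm3.3.3}, use Theorem~\ref{Thm3.2} (Bertrand) when $q$ is algebraic, and use Theorem~\ref{Thm3.1} (Schneider) together with the Jacobi identity $\theta_2^4+\theta_4^4=\theta_3^4$ when $\tau$ is algebraic of degree $\geq 3$. The only difference is cosmetic---you treat the two cases in the opposite order and add a few words of justification about $\lambda\notin\{0,1\}$ and the passage from $\Q$ to $\overline{\Q}$.
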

{\em Proof.\/} \,\,
{\it Case 1.} $\tau$ is an algebraic number of degree $\geq 3$. \\
By Theorem\,\ref{Thm3.1}, the number 
$$
j(\tau)=256\frac{(\lambda^2-\lambda+1)^3}{\lambda^2(\lambda-1)^2}
$$
is transcendental. This implies that $\lambda=\theta^4_2/\theta^4_3$ is transcendental, and so is $\theta_2/\theta_3$. Now by using the identity
$\theta_2^4+\theta^4_4=\theta^4_3$, we conclude that the number $\theta_3/\theta_4$ is transcendental. \\
\noindent{\it Case 2.} $\tau\in\mathbb{H}$ such that $q=e^{i\pi\tau}$ is algebraic over $\mathbb{Q}$ \\
Since $q=e^{i\pi \tau}$ is an algebraic number, $\theta_3$ and $\theta_4$ are algebraically independent as well as $\theta_2$ and $\theta_3$ 
(cf.\,Theorem\,\ref{Thm3.2}). Therefore, the numbers $\theta_3/\theta_4$ and $\theta_2/\theta_3$ are transcendental.  
By Cases\,1 and 2, we complete the proof of the lemma. \hfill $\Box$
\begin{lemma} 
Let $m\geq 3$ be an integer which is either odd or  an even number of the form $2^a s$, where $a\geq 1$, and $s\geq 3$ is an odd integer.  Then, 
for any $\tau\in\mathbb{H}$ satisfying the conditions in Theorem\,\ref{Thm3.3.3}, the number 
$
\frac{\theta_3(m\tau)}{\theta_3(\tau)}
$
is transcendental. 
\label{Lem3.4}
\end{lemma}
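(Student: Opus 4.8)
The plan is to reduce transcendence of $\theta_3(m\tau)/\theta_3(\tau)$ to the transcendence of $\theta_2/\theta_3$ (equivalently of $\lambda(\tau)$), which is already guaranteed by Lemma \ref{Lem3.3} under either hypothesis on $\tau$. The key observation is that in both cases the ratio $X_0 := \theta_3^4(m\tau)/\theta_3^4(\tau)$ satisfies an algebraic equation over $\mathbb{Q}$ whose other argument is a fixed power of $\lambda$: namely, if $m$ is odd, Theorem \ref{Thm3.3} gives $P_m\bigl(m^2 X_0,\,16\,\theta_2^4(\tau)/\theta_3^4(\tau)\bigr)=0$, while if $m=2^a s$ with $a\ge 1$ and $s\ge 3$ odd, Lemma \ref{Lem3.3.1} gives $Q_m\bigl(X_0,\,\theta_2^4(\tau)/\theta_3^4(\tau)\bigr)=0$. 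So $X_0$ is algebraic over $\mathbb{Q}\bigl(\theta_2^4/\theta_3^4\bigr) = \mathbb{Q}(\lambda)$.

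First I would argue by contradiction: suppose $\theta_3(m\tau)/\theta_3(\tau)$ is algebraic, hence so is $X_0$. Then from the polynomial relation above, $\lambda = \theta_2^4/\theta_3^4$ would be forced to be algebraic as well — but one must be careful here, since a single polynomial identity $P(X_0, Y_0)=0$ with $X_0$ algebraic does not by itself force $Y_0$ algebraic unless the polynomial genuinely involves $Y$. This is where the structural information collected in Section 3 is essential. For odd $m$: by Theorem \ref{Thm3.3}, writing $P_m(X,Y)=\sum_{j=0}^{d_m} S_{m,j}(X)Y^j$ with $d_m>0$, $S_{m,d_m}\not\equiv 0$, and $S_{m,j}(0)=0$ for $1\le j\le d_m$ while $S_{m,0}(0)\ne 0$, the specialization $P_m(m^2 X_0, Y)$ is a polynomial in $Y$ that is \emph{not} identically zero (its constant term is $S_{m,0}(m^2X_0)$, and its top coefficient $S_{m,d_m}(m^2X_0)$ — one checks $X_0\ne 0$ since $\theta_3$ never vanishes on $\mathbb{H}$, and by Lemma \ref{Lem3.2} the value $m^2X_0$ would have to be among the finitely many roots of $S_{m,d_m}$, which I would rule out, or alternatively note $P_m(m^2X_0,Y)$ has degree $\ge 1$ in $Y$ as long as $X_0$ is not a common root of all $S_{m,j}$, $j\ge 1$). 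Since $P_m$ has $\mathbb{Z}$ coefficients, $P_m(m^2X_0,Y)$ has algebraic coefficients, so its root $Y_0 = 16\,\theta_2^4/\theta_3^4$ is algebraic, contradicting Lemma \ref{Lem3.3}. For $m=2^a s$ even: by Lemma \ref{Lem3.3.1}, $Q_m(0,Y)=c_m^{2^a}Y^{2^a\psi(s)}$ with $c_m\ne 0$, so the polynomial $Q_m(X_0,Y)$ in $Y$ has leading coefficient $c_m^{2^a}$ (a nonzero integer, independent of $X_0$) and degree $2^a\psi(s)\ge 1$; hence it is a nonzero polynomial with algebraic coefficients having the algebraic number $X_0$ baked in, and its root $Y_0=\theta_2^4/\theta_3^4$ must be algebraic — again contradicting Lemma \ref{Lem3.3}.

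The main obstacle, and the only point requiring genuine care, is verifying that the specialized polynomials $P_m(m^2X_0,Y)$ and $Q_m(X_0,Y)$ are honestly nonconstant (i.e.\ of positive degree) in $Y$ — equivalently that $X_0$ is not an exceptional value killing all the higher-degree coefficients. For the even case Lemma \ref{Lem3.3.1} settles this cleanly because the top $Y$-coefficient $c_m^{2^a}$ does not depend on $X$ at all. For the odd case I would invoke the degree bound $\deg_X S_{m,d_m}$ together with Remark 1 (which records that $P_m(X,\alpha)\not\equiv 0$ for every complex $\alpha$) applied with $\alpha=Y_0$: since $P_m(X,Y_0)$ is a nonzero polynomial in $X$ and $X_0$ is one of its roots, $X_0$ is algebraic over $\mathbb{Q}(Y_0)$; running the argument the other way, if $X_0$ were algebraic over $\mathbb{Q}$ then because $P_m$ genuinely contains $Y$ (as $d_m>0$) and $P_m(m^2X_0,Y)\not\equiv 0$, we conclude $Y_0$ algebraic. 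Having derived a contradiction with Lemma \ref{Lem3.3} in every case, $\theta_3(m\tau)/\theta_3(\tau)$ is transcendental, completing the proof. \hfill $\Box$
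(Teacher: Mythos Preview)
Your approach is the same as the paper's: assume $X_0=\theta_3^4(m\tau)/\theta_3^4(\tau)$ is algebraic, specialize the appropriate integer polynomial ($P_m$ or $Q_m$) at $X_0$ to obtain a polynomial in $Y$ with algebraic coefficients vanishing at $Y_0=\theta_2^4/\theta_3^4$, and contradict Lemma~\ref{Lem3.3}. Your treatment of the even case $m=2^as$ is clean and matches the paper exactly: by \eqref{eq2.1} the leading $Y$-coefficient of $Q_m(X,Y)$ is the nonzero integer $c_m^{2^a}$, independent of $X$, so $Q_m(X_0,Y)$ is automatically a nonzero polynomial of positive degree in $Y$.

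The one genuine gap is in the odd case: you correctly isolate the crux, namely that $P_m(m^2X_0,Y)$ must be shown not to vanish identically in $Y$, but none of the arguments you sketch actually closes it. Remark~1 only says $P_m(X,\alpha)\not\equiv 0$ in $X$ for each fixed $\alpha$, which is the wrong direction; and the observation that $m^2X_0$ would have to lie among the finitely many roots of $S_{m,d_m}$ (or of $S_{m,0}$, via Lemma~\ref{Lem3.2}) does not by itself yield a contradiction, since under the standing hypothesis ``$X_0$ is algebraic'' there is no a priori obstruction to $X_0$ being one of those particular algebraic numbers. Your final paragraph ends up assuming the very fact $P_m(m^2X_0,Y)\not\equiv 0$ that needs proof. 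The paper does not argue this point from scratch either; it simply cites \cite[Lemma~2.1]{elsner}, which supplies exactly this nonvanishing for odd $m$. So your plan is correct in outline, but at this step you should either invoke that external lemma or supply an independent argument (e.g.\ an irreducibility or content argument for $P_m$ over $\mathbb{C}[X]$ ruling out a linear factor $X-m^2X_0$).
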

{\em Proof.\/} \,\,We assume that $\theta_3(m\tau)/\theta_3(\tau)$  is algebraic.  By Theorem\,\ref{Thm3.3} and Lemma\,\ref{Lem3.3.1}, there exists an 
integer polynomial $T_m(X,Y)$ defined by
\[T_m(X,Y) \,:= \quad \left\{ \begin{array}{lcl}
P_m(m^2X,16Y) &,& \,\,\mbox{if} \,\,m\equiv \,1 \pmod 2 \,,\\
Q_m(X,Y) &,& \,\,\mbox{if} \,\,m\equiv \,0 \pmod 2 \,,
\end{array} \right. \]
such that 
\begin{equation*}\label{eq2.5}
\tag{3.9}
T_m\left(\frac{\theta^4_3(m\tau)}{\theta^4_3(\tau)}, \frac{\theta^4_2(\tau)}{\theta^4_3(\tau)}\right)=0. 
\end{equation*}
Now we consider the polynomial $R_m(Y)= T_m\left(\frac{\theta^4_3(m\tau)}{\theta^4_3(\tau)}, Y\right) \in \overline{\Q}[Y]$ having algebraic coefficients by our
assumption. The polynomial $R_m(Y)$ does not vanish identically: for odd integers $m$ this follows from \cite[Lemma 2.1]{elsner}, for even $m$ this is a 
consequence of Lemma\,\ref{Lem3.3.1}, cf.\eqref{eq2.1}. Hence, by \eqref{eq2.5},  we obtain 
$$
R_m\left(\frac{\theta^4_2(\tau)}{\theta^4_3(\tau)}\right)=T_m\left(\frac{\theta^4_3(n\tau)}{\theta^4_3(\tau)}, \frac{\theta^4_2(\tau)}{\theta^4_3(\tau)}\right)=0.
$$
This implies that $\theta_2/\theta_3$  is algebraic, which is a contradiction to Lemma\,\ref{Lem3.3}. Therefore, we conclude that the number
$\theta_3(m\tau)/\theta_3(\tau)$ is
transcendental.  \hfill $\Box$


\section{The proofs of our results}
\label{Sec4}
{\em Proof of Theorem\,\ref{maintheorem}.}
{\em (i)}\,\,Suppose that the functions $\theta_3(a_1 \tau),\ldots, \theta_3(a_m\tau)$  are linearly dependent over $\mathbb{C(\tau)}$. Then there
exist $c_1(\tau), \ldots, c_m(\tau)\in\mathbb{C[\tau]}$, not all zero and with minimal degree, such that
\begin{equation*}\label{eq4.1}
\tag{4.1}
c_1(\tau) \theta_3(a_1 \tau)+\cdots+c_m(\tau)\theta_3(a_m\tau)=0 \quad \mbox{~~for all ~~} \tau\in\mathbb{H}.
\end{equation*}
Let $M$ be the  common denominator of the rational numbers $a_1,\ldots,a_m$. Then $M a_j\in\mathbb{Z}$ for every $j=1,\ldots,m$, and we  notice 
that  
$$
\theta_3(a_j (\tau+2M))=1+2\sum_{n=1}^\infty e^{i\pi a_j \tau n^2} e^{2 M a_j \pi i n^2}=1+2\sum_{n=1}^\infty e^{i\pi a_j \tau n^2}=\theta_3(a_j \tau)
$$
 for all $j=1,2,\ldots, m$. Hence, the functions $\theta_3(a_1 \tau), \theta_3(a_2\tau), \ldots, \theta_3(a_m\tau)$ are periodic.  
\bigskip

Replacing $\tau$ by $\tau+2M$ and using the periodicity, we have
\begin{equation*}\label{eq4.2}
\tag{4.2}
c_1(\tau+2M) \theta_3(a_1 \tau)+\cdots+c_m(\tau+2M)\theta_3(a_m\tau)=0 \quad \mbox{~~for all ~~} \tau\in\mathbb{H}.
\end{equation*}
Thus, from \eqref{eq4.1} and \eqref{eq4.2}, we obtain
$$
(c_1(\tau)-c_1(\tau+2M))\theta_3(a_1\tau)+\cdots+(c_1(\tau)-c_m(\tau+2M))\theta_3(a_m\tau)=0 \quad \mbox{~~for all ~~} \tau\in\mathbb{H}.
$$
Note that the degree of the polynomial $c_j(\tau+2M)-c_j(\tau)$ is strictly less than the degree of the polynomial $c_j(\tau)$. Therefore, by the minimality of the
polynomials $c_1(\tau), \ldots, c_m(\tau)$, we get $c_j(\tau+2M)=c_j(\tau)$ for all $j=1,2,\ldots,m$, which in turns implies that $c_1(\tau), \ldots, c_m(\tau)$ are
constant polynomials. Hence, in order to prove that these functions are linearly independent over $\mathbb{C(\tau)}$, it suffices to prove the linear independence 
over $\mathbb{C}$.
\bigskip

Therefore we can consider the identity
$$
c_1 \theta_3(a_1 \tau)+\cdots+c_m\theta_3(a_m\tau)=0, \quad \mbox{~~for all ~~} \tau\in\mathbb{H}  \mbox{~~~and fixed~~~} c_j\in\mathbb{C}.
$$
This can be rewritten as  
\begin{equation*}\label{eq4.3}
\tag{4.3}
c_1\left(1+2\sum_{n=1}^\infty  e^{i\pi \tau a_1n^2}\right)+\cdots+c_m\left(1+2\sum_{n=1}^\infty  e^{i\pi \tau a_m n^2}\right)=0 \quad \mbox{~~for all ~~}
\tau\in\mathbb{H}.
\end{equation*}
Putting $\tau=iX$ and letting $X\rightarrow\infty$ in the above equality, we have
$$
(c_1+\cdots+c_m)+2\lim_{X\rightarrow\infty}\left(c_1\sum_{n=1}^\infty  e^{-\pi X a_1n^2}+\cdots+c_m\sum_{n=1}^\infty  e^{-\pi X a_m n^2}\right)=0.
$$
Since $\lim_{X\rightarrow\infty}\left(\sum_{n=1}^\infty  e^{-\pi X a_j n^2}\right)=0$ for all $j=1,2,\ldots,m$, we have
$$
c_1+c_2+\cdots+c_m=0.  
$$
Therefore \eqref{eq4.3} becomes  
\begin{equation*}\label{eq4.4}
\tag{4.4}
c_1\sum_{n=1}^\infty  e^{-\pi Xa_1n^2}+\cdots+c_m\sum_{n=1}^\infty  e^{-\pi X a_m n^2}=0 \quad \mbox{~~for all ~~} X>0.
\end{equation*}
Without loss of generality we can  assume that $a_1<a_2<\cdots<a_m$. Multiplying the above equality by $e^{a_1\pi X}$, we get   
\begin{equation*}\label{eq4.5}
\tag{4.5}
-c_1=c_1\sum_{n=2}^\infty  e^{-\pi Xa_1n^2+\pi X a_1 }+\Big( \,c_2\sum_{n=1}^\infty  e^{-\pi Xa_2n^2+\pi X a_1 }+\cdots
+c_m\sum_{n=1}^\infty  e^{-\pi X a_m n^2+\pi X a_1}\,\Big)
\end{equation*}
Since $-\pi a_1n^2+\pi a_1<0$ for $n\geq 2$ and $-\pi a_j n^2+\pi a_1<0$ for all $j=2,3,\ldots,m$, we see that the right-hand side of \eqref{eq4.5} tends to zero as
$X\rightarrow\infty$. Therefore, we conclude that $c_1=0$, and \eqref{eq4.4} becomes 
$$
c_2\sum_{n=1}^\infty  e^{-\pi Xa_2n^2}+\cdots+c_m\sum_{n=1}^\infty  e^{-\pi X a_m n^2}=0 \quad \mbox{~~for all ~~} X\in\mathbb{N}.
$$
Now we multiply the above equality by $e^{a_2\pi X}$ and proceed by the same process in order to get $c_2=0$. Hence, by continuing this process, we get
$c_1=c_2=\cdots=c_m=0$, which gives a contradiction. \\ 
{\em (ii)\/}\,\,As we have seen in the above proof of the first statement in Theorem 2.1, the assumption that the numbers $a_1,\ldots,a_m$ are integers, is
only used to reduce the arguments on the linear independence of $m$-theta functions $\theta_3(a_1\tau),\ldots,\theta_3(a_m\tau)$ over the field $\mathbb{C}$. Therefore, 
by using the similar approach as above, one can prove the second statement in Theorem 2.1. This proves the theorem. \hfill $\Box$ \\

\bigskip

{\em Proof of Theorem\,\ref{Thm3.3.3}.} 
Throughout this proof, we denote by $\deg_X~T(X,Y)$ and $\deg_Y~T(X,Y)$ the degree of a polynomial $T(X,Y)$ with respect to $X$ and $Y$, respectively; and 
by $\deg T(X,Y)$ we denote the total degree of the polynomial $T(X,Y)$. \vspace*{5pt} \\
It is suffices to prove  that the three numbers
$
1, \frac{\theta_3(m\tau)}{\theta_3(\tau)},  \frac{\theta_3(n\tau)}{\theta_3(\tau)}
$
are  linearly independent over $\overline{\mathbb{Q}}$. Suppose that these numbers are  linearly dependent over $\overline{\mathbb{Q}}$. Then, there exist algebraic 
integers $\alpha_0 , \alpha_1, \alpha_2$ not all zero such that
\begin{equation*}\label{eq4.6}
\tag{4.6}
\alpha_0 +\alpha_1\frac{\theta_3(m\tau)}{\theta_3(\tau)}+\alpha_2\frac{\theta_3(n\tau)}{\theta_3(\tau)}=0.
\end{equation*}
It is clear that neither $\alpha_1$ nor $\alpha_2$ vanishes, sine otherwise (when $\alpha_1=0$, $\alpha_2\not= 0$ or $\alpha_1\not= 0$, $\alpha_2= 0$) there is 
a contradiction to Lemma\,\ref{Lem3.4}, as both the numbers 
$  
\frac{\theta_3(m\tau)}{\theta_3(\tau)}\quad\mbox{and}\quad \frac{\theta_3(n\tau)}{\theta_3(\tau)}
$
are transcendental. This implies that both, $\alpha_1$ and $\alpha_2$, are non-zero. Then, when $\alpha_0=0$, we get a contradiction to \cite[Theorem\,1.1]{elsner2}.
Thus,  it implies that  $\alpha_0\alpha_1\alpha_2 \not= 0$. Then from \eqref{eq4.6} and Theorem\,\ref{Thm3.3}, we have
\begin{equation*}\label{eq4.7}
\tag{4.7}
P_m\left(m^2\left(-\frac{\alpha_0 }{\alpha_1}-\frac{\alpha_2}{\alpha_1}\frac{\theta_3(n\tau)}{\theta_3(\tau)}\right)^4, 16\frac{\theta^4_2(\tau)}
{\theta^4_3(\tau)}\right)=0.
\end{equation*}
By the explicit form of the polynomials $P_m(X, Y)$ and $P_n(X, Y)$, we see that the polynomials 
$$
H_m(X)=P_m\left(m^2{\left(-\frac{\alpha_0 }{\alpha_1}-\frac{\alpha_2}{\alpha_1}X\right)}^4,  16\frac{\theta^4_2(\tau)}{\theta^4_3(\tau)}\right)\quad\mbox{and}\quad
S_n(X)=P_n\left(n^2X^4, 16\frac{\theta^4_2(\tau)}{\theta^4_3(\tau)}\right)
$$
are non-zero. The polynomials 
$$
P_m\left(m^2\left(-\frac{\alpha_0 }{\alpha_1}-\frac{\alpha_2}{\alpha_1}\frac{\theta_3(n\tau)}{\theta_3(\tau)}\right)^4, 16Y\right)\quad\mbox{and}\quad 
P_n\left(n^2\frac{\theta^4_3(n\tau)}{\theta^4_3(\tau)}, 16Y\right)
$$ 
have the same common root $Y_0:=\theta^4_2(\tau)/\theta^4_3(\tau)$. Hence, the resultant 
\begin{equation*}\label{eq4.8}
\tag{4.8}
R(X):= \mbox{Res}_Y\left(P_m\left(m^2{\left(-\frac{\alpha_0 }{\alpha_1}-\frac{\alpha_2}{\alpha_1}X\right)}^4, 16Y\right), 
P_n(n^2 X^4, 16Y)\right)
\end{equation*}
is given by the determinant of a square matrix where the dimensions and elements of the corresponding Sylvester matrix depend on the degrees and
on the coefficients, respectively, of the polynomials
\[P_m\left(m^2\left(-\frac{\alpha_0}{\alpha_1}-\frac{\alpha_2}{\alpha_1}\right)X, 16Y\right) \]
and $P_n(n^2 X, 16Y)$. By Lemma \,\ref{Lem3.4} we know that $\theta_3(n\tau)/\theta_3(\tau)$ is transcendental. Then, from $S_{n,d_n}\not\equiv 0$ in (\ref{EQN2}), 
we have
\[S_{m,d_m}\Big( \,m^2{\Big( -\frac{\alpha_0 }{\alpha_1} - \frac{\alpha_2}{\alpha_1}\cdot \frac{\theta_3(n\tau)}{\theta_3(\tau)} \Big)}^4\,\Big) \,\not= \, 0  
\qquad \mbox{and} \qquad
S_{n,d_n}\Big( n^2\frac{\theta_3^4(n\tau)}{\theta_3^4(\tau)} \Big) \,\not= \, 0 \,.\] 
Hence, there is some real number $\delta >0$ depending on $n,m,\alpha_0,\alpha_1,\alpha_2$, and $\tau$ such that
\[S_{m,d_m}\Big( \,m^2{\Big( -\frac{\alpha_0 }{\alpha_1} - \frac{\alpha_2}{\alpha_1}X \Big)}^4\,\Big) \,\not= \, 0  \qquad \mbox{and} \qquad
S_{n,d_n}\Big( n^2X^4 \Big) \,\not= \, 0 \]
hold for
\[\Big| X -  \frac{\theta_3(n\tau)}{\theta_3(\tau)}\Big| \,<\, \delta \,.\]
Then, for all $X$ inside this circle, we have
\[\deg_Y P_m\left(m^2{\left(-\frac{\alpha_0 }{\alpha_1}-\frac{\alpha_2}{\alpha_1}X\right)}^4, 16Y\right) \,=\, 
\deg_Y P_m\left(m^2{\left(-\frac{\alpha_0 }{\alpha_1}-\frac{\alpha_2}{\alpha_1}\cdot \frac{\theta_3(n\tau)}{\theta_3(\tau)}\right)}^4, 16Y\right) \,=\, d_m \,,\]  
and, similarly,
\[\deg_Y P_n\big( n^2X^4,16Y \big) \,=\, \deg_Y P_n\Big( n^2\frac{\theta_3^4(n\tau)}{\theta_3^4(\tau)},16Y \Big) \,=\, d_n \,.\]  
For $X$ restricted to the inside of the circle with radius $\delta$ mentioned above, $R(X)$ can be considered as a polynomial in $X$ depending on the elements of a 
Sylvester matrix with fixed dimensions $d_n+d_m$. On the scale of things $R(X)$ is some polynomial with algebraic coefficients such that
\begin{equation*}\label{eq4.9}
\tag{4.9}
R\Big( \,\frac{\theta_3(n\tau)}{\theta_3(\tau)} \,\Big) \,=\,0\,,
\end{equation*}
since $Y_0$ is a common root of the polynomials under consideration. 
First we note that the polynomial $R(X)$ is not identically zero. We assume the contrary, namely that
\begin{equation*}\label{eq4.10}
\tag{4.10}
R(X) \,\equiv \,0\,.  
\end{equation*}
Then, by  \eqref{eq4.8} and \eqref{eq4.10},
$$
\mbox{Res}_Y\left(P_m\left(m^2{\left(-\frac{\alpha_0 }{\alpha_1}-\frac{\alpha_2}{\alpha_1}X\right)}^4, 16Y\right), P_n(n^2 X^4, 16Y)\right)=R(X)\equiv 0,
$$ 
and so there exists a common factor $H(X, Y)\in\mathbb{C}[X, Y]$ with positive degree in $Y$ of the polynomials 
$$
P_m\left(m^2{\left(-\frac{\alpha_0 }{\alpha_1}-\frac{\alpha_2}{\alpha_1}X\right)}^4, 16Y\right)\quad\mbox{and}\quad P_n(n^2 X^4, 16Y).
$$ 
Let 
\begin{equation*} \label{eq4.11}
\tag{4.11}
P_m\left(m^2{\left(-\frac{\alpha_0 }{\alpha_1}-\frac{\alpha_2}{\alpha_1}X\right)}^4, 16Y\right)=H(X, Y)G(X, Y).
\end{equation*}
By substituting $Y=\lambda(\tau)$ defined in the introduction into the above equation, we have
\begin{equation*}\label{eq4.12}
\tag{4.12}
P_m\left(m^2{\left(-\frac{\alpha_0 }{\alpha_1}-\frac{\alpha_2}{\alpha_1}X\right)}^4, 16\lambda(\tau)\right)=H(X, \lambda(\tau)) G\left(X, \lambda(\tau)\right).
\end{equation*}
Using the definition of $P_m(X, Y)$ in Theorem\,\ref{Thm3.3}, we have
$$
\mbox{deg}_Y~R_k(Y) \,\leq\, k \frac{m-1}{m} \,<\, k \qquad \big( 1\leq k\leq \psi(m)\big) 
$$
and, for $k=0$,
\[\mbox{deg}_Y~R_0(Y) \,=\, 0\,,\quad \mbox{since} \,\,R_0(Y)\,=\,1 \,.\]
Thus, we obtain by the right hand side of formula (\ref{eq1.1}),
\begin{align*} \label{eq4.13}
\deg P_m(X,Y) &= \max_{0\leq k\leq \psi(m)} \Big\{ \psi(m) - k + \deg_Y R_k(Y)\,\Big\} \nonumber \\
&= \psi(m) \nonumber \\
&= \deg_X P_m(X,Y)\\
\tag{4.13}
&=\deg_X P_m(X,16 \lambda(\tau)) \,.
\end{align*}
The last two identities in (\ref{eq4.13}) are a consequence of 
\[P_m(X,Y) \,=\, X^{\psi(m)} + \mathcal{O}_Y \big( X^{\psi(m)-1} \big) \,.\] 
Replacing $X$ in (\ref{eq4.13}) by
\[m^2{\left(-\frac{\alpha_0 }{\alpha_1}-\frac{\alpha_2}{\alpha_1}X\right)}^4\,,\] 
we obtain the identity
\begin{equation*} \label{eq4.14}
\tag{4.14}
\mbox{deg}_X~P_m\left(m^2{\left(-\frac{\alpha_0 }{\alpha_1}-\frac{\alpha_2}{\alpha_1}X\right)}^4, 16\lambda(\tau)\right) 
\,=\, \mbox{deg}~P_m\left(m^2{\left(-\frac{\alpha_0 }{\alpha_1} - \frac{\alpha_2}{\alpha_1}X\right)}^4,16Y\right) \,.
\end{equation*}
Hence, by the above identities, we obtain 
\begin{align*} \label{eq4.15}
\deg_X H\big( X,\lambda(\tau) \big) + \deg_X G\big( X,\lambda(\tau) \big) &= \deg_X H\big( X,\lambda(\tau) \big)  G\big( X,\lambda(\tau) \big) \nonumber \\
&\stackrel{(\ref{eq4.12})}{=} \deg_X P_m \left(m^2{\left(-\frac{\alpha_0 }{\alpha_1}-\frac{\alpha_2}{\alpha_1}X\right)}^4, 16\lambda(\tau)\right) \nonumber \\
&\stackrel{(\ref{eq4.14})}{=} \deg P_m \left(m^2{\left(-\frac{\alpha_0 }{\alpha_1}-\frac{\alpha_2}{\alpha_1}X\right)}^4, 16Y\right) \nonumber \\
&\stackrel{(\ref{eq4.11})}{=} \deg H(X,Y)G(X,Y) \nonumber \\
\tag{4.15}
&\hspace*{7pt}= \deg H(X,Y) + \deg G(X,Y) \,.
\end{align*}
Additionally, we have the obvious inequalities
\begin{equation*} \label{eq4.16}
\tag{4.16}
\deg_X H\big( X,\lambda(\tau) \big) \,\leq \, \deg H(X,Y) \qquad \mbox{and} \qquad \deg_X G\big( X,\lambda(\tau) \big) \,\leq \, \deg G(X,Y) \,.
\end{equation*}
Thus, we obtain from (\ref{eq4.15}) and (\ref{eq4.16}) that $\deg_X H(X,\lambda(\tau))=\deg H(X,Y)$ (and, similarly,  $\deg_X G(X,\lambda(\tau))=\deg G(X,Y)$), 
and consequently
\begin{equation*}\label{eq4.17}
\tag{4.17}
\mbox{deg}_X~H(X,\lambda(\tau)) \geq \mbox{deg}_Y~H(X,Y)\geq 1.
\end{equation*}
By Lemma \ref{Lem3.3}, $\frac{\theta_2(\tau)}{\theta_3(\tau)}$ is transcendental in each of the two cases $\tau$ algebraic of degree $\geq 3$ and $\tau$ such that 
$e^{i\pi\tau}$ is algebraic, hence the number $16\lambda(\tau)$ is transcendental over ${\Q}$. So, it is also transcendental over $\overline{\Q}$. Let
\[\beta_m \,:=\, \frac{\theta_3(m\tau)}{\theta_3(\tau)} \,.\]
By Theorem\,\ref{Thm3.3}, we know that $P_m\big( m^2\beta_m^4,16\lambda(\tau) \big)=0$. Moreover, it follows from \cite[Lemma\,2]{luca} that the polynomial
$P_m\big( X,16\lambda(\tau) \big)$ is irreducible over the field ${\K}:={\overline{\Q}}\big( \lambda(\tau) \big)$. Overall, we may consider $\beta_m$ as
an algebraic number over ${\K}$, where,  by (\ref{eq1.1}),
\begin{equation}\label{4.18}
\tag{4.18}
\deg_{\K}\beta_m \,=\, 4\deg_X P_m(X,Y) \,=\, 4\psi(m) \,.
\end{equation}
Let us assume that the polynomial
\begin{equation}\label{4.19}
\tag{4.19}
P_m\Big( \,m^2{\Big( \,-\frac{\alpha_0}{\alpha_1} -\frac{\alpha_2}{\alpha_1}X \,\Big)}^4,16\lambda(\tau)\,\Big)  
\end{equation}
is reducible over ${\K}$. Then we obtain the inequality
\[\deg_{\K} \Big( \,m^2{\Big( \,-\frac{\alpha_0}{\alpha_1} -\frac{\alpha_2}{\alpha_1}\beta_m \,\Big)}^4\,\Big) \,<\, 4\psi(m) \,.\]
But this is impossible by (\ref{4.18}), and by 
\[m^2,\,\frac{\alpha_0}{\alpha_1},\,\frac{\alpha_2}{\alpha_1} \,\in\, \overline{\Q}\setminus \{ 0 \}\,,\]
since $\beta_m$ is transcendental over $\overline{\Q}$ by the identity $P_m(m^2 \beta^4_m, 16 \lambda(\tau))=0$ and by the 
transcendence of $\lambda(\tau)$, as noticed earlier.
The contradiction proves the irreducibility of the polynomial in (\ref{4.19}). \vspace*{5pt} \\
Thus, from \eqref{eq4.11} and \eqref{eq4.17},  we obtain
$$
P_m\left(m^2{\left(-\frac{\alpha_0 }{\alpha_1}-\frac{\alpha_2}{\alpha_1}X\right)}^4, 16\frac{\theta^4_2(\tau)}{\theta^4_3(\tau)}\right)=\beta_1 H(X, \lambda(\tau))
$$
for some non-zero complex number $\beta_1$.  Similarly, there exists a non-zero complex number $\beta_2$  such that
$$
P_n(n^2 X^4, 16\lambda(\tau))=\beta_2  H(X, \lambda(\tau)),
$$
and hence
$$
P_m\left(m^2{\left(-\frac{\alpha_0 }{\alpha_1}-\frac{\alpha_2}{\alpha_1}X\right)}^4, 16\lambda(\tau)\right)=cP_n(n^2 X^4, 16\lambda(\tau)), \quad c:=\beta_1/\beta_2.
$$
This polynomial identity holds for all complex numbers $\tau \in \mathbb{H}$. We know that for $\tau\rightarrow i\infty$ 
the function $\lambda(\tau)$ tends to zero. Hence, taking $\tau\rightarrow i\infty$ into the above equality, we have by Lemma\,\ref{Lem3.2}, 
$$
\prod_{d|m}\left(m^2{\left(-\frac{\alpha_0 }{\alpha_1}-\frac{\alpha_2}{\alpha_1}X\right)}^4-d^2\right)^{w(d,m/d)}=c\prod_{d|n}(n^2 X^4-d^2)^{w(d,n/d)} \,.
$$
Then, comparing the multiplicity of the zero of these polynomials at $X=-\left(\alpha_0 +\alpha_1/\sqrt{m}\right)/\alpha_2$ (and $d=1$ on the left-hand side), we obtain
$$
m=w(1,m)\leq\max_{d}~w(d,n/d)\leq n,
$$
which is a contradiction to the condition $n<m$ from the theorem. Hence, the polynomial $R(X)$ is non-zero.  Therefore, it follows from \eqref{eq4.9} 
that the number $\theta_3(n\tau)/\theta_3(\tau)$ is algebraic, which is a contradiction to the fact from Lemma\,\ref{Lem3.4} that the number
$\theta_3(n\tau)/\theta_3(\tau)$ 
is transcendental. This proves the assertion. \hfill $\Box$
\bigskip

{\em Proof of Theorem\,\ref{Thm3.3.4}.} \,Let $m=2^a s_1$ and $n=2^b s_2$ be two different integers with $a,b\geq 1$ and odd integers $s_1,s_2\geq 3$. By  
Lemma\,\ref{Lem3.3.1} there exist integer polynomials $Q_m(X, Y)$ and $Q_n(X, Y)$  such that 
$$
Q_m\left(\frac{\theta^4_3(m\tau)}{\theta^4_3(\tau)}, \frac{\theta^4_2(\tau)}{\theta^4_3(\tau)}\right)=0 \qquad \mbox{and} \qquad
Q_n\left(\frac{\theta^4_3(n\tau)}{\theta^4_3(\tau)}, \frac{\theta^4_2(\tau)}{\theta^4_3(\tau)}\right)=0.
$$
We assume that the linear equation \eqref{eq4.6} holds, where $\alpha_0,\alpha_1,\alpha_2$ are algebraic numbers satisfying the hypothesis in 
Theorem\,\ref{Thm3.3.4}. As in the proof of Theorem\,\ref{Thm3.3.3},  we have $\alpha_0\alpha_1\alpha_2 \not= 0$. By the hypotheses of the theorem, we may assume without
loss of generality that $\beta:=\alpha_2/\alpha_0$ satisfies $R_{n,0}(\beta^{-4})\not=0$. Namely, it is obvious by (\ref{eq2.4}) and Lemma\,\ref{Lem3.2} that 
$R_{n,0}(\beta^{-4})\not=0$ holds particularly for $\beta^{-1} \not\in M_{s_2}$ and for $\beta \not\in {\Q}$. \vspace*{5pt} \\
Then we obtain
$$
Q_m\left(\left(-\frac{\alpha_0}{\alpha_1}-\frac{\alpha_2}{\alpha_1}\frac{\theta_3(n\tau)}{\theta_3(\tau)}\right)^4, \frac{\theta^4_2(\tau)}{\theta^4_3(\tau)}\right)=0.
$$
By the explicit form of the polynomials $Q_m(X, Y)$ and $Q_n(X, Y)$, we see that the polynomials 
$$
Q_m\left({\left(-\frac{\alpha_0}{\alpha_1}-\frac{\alpha_2}{\alpha_1}X\right)}^4,  \frac{\theta^4_2(\tau)}{\theta^4_3(\tau)}\right) \qquad \mbox{and} \qquad 
Q_n\left(X^4, \frac{\theta^4_2(\tau)}{\theta^4_3(\tau)}\right)
$$
are non-zero. Hence, the polynomials 
$$
Q_m\left(\left(-\frac{\alpha_0}{\alpha_1}-\frac{\alpha_2}{\alpha_1}\frac{\theta_3(n\tau)}{\theta_3(\tau)}\right)^4, Y\right) \qquad \mbox{and} \qquad 
Q_n\left( \frac{\theta^4_3(n\tau)}{\theta^4_3(\tau)}, Y\right)
$$
have the same common root $Y_0=\theta^4_2(\tau)/\theta^4_3(\tau)$.  

\noindent 
Let
\[H_m(X,Y) \,:=\, Q_m\left(\left(-\frac{\alpha_0}{\alpha_1}-\frac{\alpha_2}{\alpha_1}X\right)^4, Y\right) \]
and
\[W(X) \,:=\, \mbox{Res}_Y \big( H_m(X,Y),\,Q_n(X^4,Y) \big) \,\in \, \overline{\Q}[X] \,.\]
From Lemma\,\ref{Lem3.3.1} we know that both, $\deg_Y Q_m(X,Y)$ and $\deg_Y Q_n(X,Y)$, do not depend on $X$, since the coefficients of the leading terms with respect to 
$Y$ are non-zero integers. Thus, $W(X)$ can be considered as a polynomial for all $X$. \\
In order to show that the polynomial $W(X)$ does not vanish identically, we shall prove the existence of a number $\eta$ satisfying $W(\eta)\not= 0$, or,
equivalently, that the polynomials $H_m(\eta,Y)$ and $Q_n(\eta^4,Y)$ are coprime. Let
\[\eta \,:=\, -\frac{1}{\beta} \,=\, -\frac{\alpha_0}{\alpha_2} \,.\]
On the one side, by using \eqref{eq2.3}, we obtain  
\[H_m(\eta,Y) \,=\, Q_m(0,Y) \,=\, c_m^{2^a}Y^{2^a \psi(s_1)}\,.\]
Therefore, $H_m(\eta,Y)$ is a nonvanishing polynomial in $Y$ having exclusively a multiple root at $Y=0$. \\
On the other side, by applying formulas \eqref{eq2.1} and (\ref{eq2.2}) in Lemma\,\ref{Lem3.3.1}, we have
\[Q_n(\eta^4,Y) \,=\, c_n^{2^b}Y^{2^b\psi(s_2)} + \sum_{j=0}^{2^b\psi(s_2)-1} R_{n,j}(\eta^4)Y^j \]
with $c_n\in {\Z}\setminus \{ 0\}$ and $R_{n,0}(X)\not\equiv 0$ by (\ref{eq2.4}) and Lemma\,\ref{Lem3.2}. We already know by $\eta=-1/\beta$ that $R_{n,0}(\eta^4) \not= 0$.
Consequently, we have $Q_n(\eta^4,0)=R_{n,0}(\eta^4) \not= 0$. \\
Altogether, the polynomials $H_m(\eta,Y)$ and $Q_n(\eta^4,Y)$ have no common root. More precisely, we obtain for $W(X)$,
\[W(\eta) \,=\, \mbox{Res}_Y \big( H_m(\eta,Y),Q_n(\eta^4,Y) \big) \,\not= \, 0 \,.\]
This shows that $W(X)$ does not vanish identically. By construction, we know that $W(X_0)$ vanishes for
$X_0 \,:=\, \frac{\theta_3(n\tau)}{\theta_3(\tau)} \,,$
which implies the algebraicity of $\theta_3(n\tau)/\theta_3(\tau)$, a contradiction to Lemma\,\ref{Lem3.4}. This finally shows that the linear relation \eqref{eq4.6}
cannot hold.
 \hfill $\Box$
\bigskip

\noindent {\em Proof of Theorem\,\ref{Thm3.3.5}.} \,Let $m=2^a s$ and $n$ be two integers with $a\geq 1$ and odd integers $n,s\geq 3$. By Theorem\,\ref{Thm3.3} and
Lemma\,\ref{Lem3.3.1} there exist integer polynomials $P_n(X,Y)$ and $Q_m(X,Y)$ such that
$$
Q_m\left(\frac{\theta^4_3(m\tau)}{\theta^4_3(\tau)}, \frac{\theta^4_2(\tau)}{\theta^4_3(\tau)}\right)=0 \qquad \mbox{and} \qquad
P_n\left(n^2\frac{\theta^4_3(n\tau)}{\theta^4_3(\tau)}, 16\frac{\theta^4_2(\tau)}{\theta^4_3(\tau)}\right)=0.
$$
We assume that the linear equation \eqref{eq4.6} holds. As in the proof of Theorem\,\ref{Thm3.3.3} we have $\alpha_0\alpha_1\alpha_2 \not= 0$. By the hypotheses 
of the theorem,  we may assume that $\beta:=\alpha_2/\alpha_0$ satisfies either $\deg_{\Q} (\beta^4) > \psi(n)$, or $S_{n,0}(n^2\beta^{-4})S_{n,d_n}(n^2\beta^{-4}) 
\not= 0$. Then we obtain
$$
Q_m\left(\left(-\frac{\alpha_0}{\alpha_1}-\frac{\alpha_2}{\alpha_1}\frac{\theta_3(n\tau)}{\theta_3(\tau)}\right)^4, \frac{\theta^4_2(\tau)}{\theta^4_3(\tau)}\right)=0.
$$
By the explicit form of the polynomials $Q_m(X, Y)$ and $P_n(X, Y)$ given by Theorem\,\ref{Thm3.3} and Lem- ma\,\ref{Lem3.3.1}, we see that the polynomials 
$$
Q_m\left({\left(-\frac{\alpha_0}{\alpha_1}-\frac{\alpha_2}{\alpha_1}X\right)}^4,  \frac{\theta^4_2(\tau)}{\theta^4_3(\tau)}\right) \qquad \mbox{and} \qquad 
P_n\left(n^2X^4, 16\frac{\theta^4_2(\tau)}{\theta^4_3(\tau)}\right)
$$
are non-zero. Hence, the polynomials 
$$
Q_m\left(\left(-\frac{\alpha_0}{\alpha_1}-\frac{\alpha_2}{\alpha_1}\frac{\theta_3(n\tau)}{\theta_3(\tau)}\right)^4, Y\right) \qquad \mbox{and} \qquad 
P_n\left( n^2\frac{\theta^4_3(n\tau)}{\theta^4_3(\tau)}, 16Y\right)
$$
have the same common root $Y_0=\theta^4_2(\tau)/\theta^4_3(\tau)$. Let
\[H_m(X,Y) \,:=\, Q_m\left(\left(-\frac{\alpha_0}{\alpha_1}-\frac{\alpha_2}{\alpha_1}X\right)^4, Y\right) \]
and
\[W(X) \,:=\, \mbox{Res}_Y \big( H_m(X,Y),\,P_n(n^2X^4,16Y) \big) \,.\]
From Lemma\,\ref{Lem3.3.1}, formula (\ref{eq2.1}), we know that $\deg_Y Q_m(X,Y)$ (and, consequently, $\deg_Y H_m(X,Y)$) does not depend on $X$, since 
the coefficient of the leading term with respect to $Y$ is the non-zero integer $c_m^{2^a}$. For all real numbers $X$ which are not a root of the polynomial 
$S_{n,d_n}(X)$ in (\ref{EQN1}), the leading term of $P_n(X,Y)$ with respect to $Y$ does not vanish. Consequently, $W(X)$ is given by the same polynomial for all
these $X$, since the degrees $\deg_Y H_m(X,Y)$ and $\deg_Y P_n(n^2X^4,16Y)$ do not depend on all $X$ satisfying $S_{n,d_n}(n^2X^4)\not= 0$. Note that 
$S_{n,d_n}(X)\not\equiv 0$ by (\ref{EQN2}). \\
In order to show that $W(X)$ does not vanish identically for $X$ with $S_{n,d_n}(n^2X^4)\not= 0$, we shall prove the existence of a number $\eta$ satisfying
$W(\eta)\not= 0$, or,
equivalently, that the polynomials $H_m(\eta,Y)$ and $P_n(n^2\eta^4,16Y)$ are coprime. Let
\[\eta \,:=\, - \frac{1}{\beta} \,=\, -\frac{\alpha_0}{\alpha_2} \,.\]
On the one side, by using \eqref{eq2.3}, we obtain  
\[H_m(\eta,Y) \,=\, Q_m(0,Y) \,=\, c_m^{2^a}Y^{2^a \psi(s)}\,.\]
Therefore, $H_m(\eta,Y)$ is a nonvanishing polynomial in $Y$ having only a multiple root at $Y=0$. \\
On the other side, by the hypothesis on $\beta$ in Theorem\,\ref{Thm3.3.5} and by 
\[\deg_X S_{n,d_n}(X) \,\leq \, \deg_X P_n(X,Y) \,=\, \psi(n) \]
(cf. (\ref{EQN1}) in Theorem\,\ref{Thm3.3}), we know that
\[S_{n,d_n}\big( n^2\eta^4 \big) \,=\, S_{n,d_n}\Big( \frac{n^2}{\beta^4}\,\Big) \,=\, S_{n,d_n}\Big( \frac{n^2\alpha_0^4}{\alpha_2^4}\,\Big) 
\,\not=\, 0 \,.\]
This shows that the degree with respect to $Y$ of the polynomial on the right-hand side of (\ref{EQN1}) does not change for the particular choice of $X=n^2\eta^4$.
Moreover, it follows from (\ref{EQN3}) that $S_{n,0}\not\equiv 0$, and therefore the inequality
\[\deg_X S_{n,0}(X) \,\leq \, \deg_X P_n(X,Y) \,=\, \psi(n) \]
and the conditions on $\beta$ imply that 
\[S_{n,0}\big( n^2\eta^4 \big) \,=\, S_{n,d_n}\Big( \frac{n^2}{\beta^4}\,\Big) \,=\, S_{n,0}\Big( \frac{n^2\alpha_0^4}{\alpha_2^4}\,\Big) \,\not=\, 0 \,.\]
Thus, again the application of (\ref{EQN1}) gives 
$P_n\big( n^2\eta^4,0 \big) \,\not= \, 0\,.$
Altogether, the polynomials $H_m(\eta,Y)$ and $P_n(n^2\eta^4,16Y)$ have no common root. More precisely, we obtain that
\[W(\eta) \,=\, \mbox{Res}_Y \big( H_m(\eta,Y),P_n(n^2\eta^4,16Y) \big) \,\not= \, 0 \,.\]
This shows that $W(X)$ does not vanish identically for all $X$ satisfying $S_{n,d_n}(n^2X^4)\not= 0$. By construction, we know that $W(X_0)$ vanishes for
$X_0 \,:=\, \frac{\theta_3(n\tau)}{\theta_3(\tau)},$
and since $\theta_3(n\tau)/\theta_3(\tau)$ is transcendental by Lemma\,\ref{Lem3.4}, we have by (\ref{EQN2}) that
\[S_{n,d_n}\Big( \,n^2\frac{\theta_3^4(n\tau)}{\theta_3^4(\tau)}\,\Big) \,\not= \, 0\,.\]
Thus, $X=X_0$ is a zero of the function $W(X)$, which restricted to all values $X$ satisfying $S_{n,d_n}(n^2X^4)\not= 0$ results in the same nonvanishing polynomial
$W(X)$. This implies the algebraicity of $\theta_3(n\tau)/\theta_3(\tau)$, a contradiction to Lemma\,\ref{Lem3.4}. This finally shows that the linear relation
\eqref{eq4.6} cannot hold. \hfill $\Box$
\smallskip

\noindent{\em Proof of Proposition\,\ref{Prop1}.\/}\quad
Replacing $\tau$ by $2^m\tau$, it suffices to prove the assertion for the three numbers
$$
\theta_3(\tau),\quad \theta_3(2\tau),\quad \theta_3(4\tau).
$$
We have the following identies:
\begin{align*}\label{eq3.1a}
2\theta^2_3(2\tau)&=\theta^2_3+\theta^2_4,\\
2\theta_3(4\tau)&=\theta_3+\theta_4.\ 
\tag{4.20}  
\end{align*}
Suppose there exist algebraic numbers $\alpha, \beta$, $\gamma$ not all zero such that
\begin{equation*}\label{eq3.2a}
\tag{4.21}
2\alpha\theta_3(\tau)+2\beta\theta_3(2\tau)+2\gamma\theta_3(4\tau)=0.
\end{equation*} 
Substituting \eqref{eq3.1a} into \eqref{eq3.2a}, we get
\begin{equation*}
2\alpha\theta_3+2\beta\sqrt{\frac{\theta^2_3+\theta^2_4}{2}}+\gamma(\theta_3+\theta_4)=0.
\end{equation*}
By rearranging this formula, we get
\begin{equation*}\label{eq3.3a}
\tag{4.22}
\left((2\alpha+\gamma)^2-2\beta^2\right)^2\theta^2_3+(\gamma^2-2\beta^2)\theta^2_4+
2\gamma(2\alpha+\gamma)\theta_3 \theta_4=0.
\end{equation*}
Dividing \eqref{eq3.3a}, by $\theta^2_3$, we obtain
\[\left((2\alpha+\gamma)^2-2\beta^2\right)^2+(\gamma^2-2\beta^2)\left(\frac{\theta_4}{\theta_3}\right)^2+
2\gamma(2\alpha+\gamma) \frac{\theta_4}{\theta_3}=0\,.\]
Hence, by Lemma\,\ref{Lem3.3}, we have 
$$
(2\alpha+\gamma)^2-2\beta^2=0, \quad \gamma^2-2\beta=0, \quad 2\gamma(2\alpha+\gamma)=0.
$$
Thus, we conclude that   
$\alpha=\beta=\gamma=0$. This proves Proposition\,\ref{Prop1}. \hfill $\Box$
\section{Concluding remarks.}  In the case when $\tau\in\mathbb{H}$  such that $e^{i\pi\tau}$ is algebraic, the number $\theta_3(\tau)$ is transcendental due to the algebraic
independence of the values $\theta_3(m\tau)$  and $\theta_3(n\tau)$ for distinct positive integers $m, n$. By our Theorem 2, we know that at least two of the numbers among 
$\theta_3(\tau), \theta_3(m\tau)$ and $\theta_3(n\tau)$ are transcendental for any $\tau\in \mathbb{H}$ such that either $\tau$ is algebraic of degree $\geq 3$ or 
$e^{i\pi \tau}$ is algebraic.  In this context, it is interesting to consider the following problem:

\bigskip
 
\noindent{\bf Problem 1.~} Let $\tau$ and $m, n$ be  as in Theorem 2.  Then $1,\theta_3(\tau), \theta_3(m\tau)$ and $\theta_3(n\tau)$ are $\overline{\mathbb{Q}}$- linearly
independent.
\smallskip

As a consequence of this problem, one can conclude the transcendence of $\theta_3(\tau)$ for algebraic $\tau$ of degree $\geq 3$, which is not known in  this case.
\bigskip

In this paper we considered linear forms in three  values of theta constant $\theta_3$. It is natural to consider the following more general problem: 
\bigskip

\noindent{\bf Problem 2.~}  Let $a_1, a_2,\ldots, a_m$ be distinct positive integers.  Let $\alpha_0,\ldots,\alpha_m$ be non-zero algebraic numbers. Under what values of $\tau\in\mathbb{H}$, the linear form
$$
L:=\alpha_1 \theta_3(a_1\tau)+\cdots+\alpha_m\theta_3(a_m \tau)
$$
does not vanish.
In the case  $\alpha_i$'s are rational and $\tau=\frac{i\log b}{2\pi}$, where  $b\geq 2$ is an integer, $L\neq 0$ by the result from  \cite{veekesh} and \cite{tachiya}.

\bigskip

\section{Acknowledgements.}
\label{Sec5}
We would like to express our deep gratitude to Professor Y.\,Tachiya for his useful comments. 
We are also indebted to the unknown reviewer for his helpful comments to improve the
manuscript.


\end{document}